\numberwithin{equation}{section}
\newcommand \reg{\operatorname{reg}}
\newcommand\link{\operatorname{link}}
\newcommand \ma{\operatorname{m}}
\newcommand \C{\mathcal{C}}
\newcommand \G{\mathcal{G}}
\newcommand \A{\mathcal{A}}
\newcommand \B{\mathcal{B}}
\newcommand \kk{\mathcal{K}}
\newtheorem{theorem}{Theorem}[section]
\newtheorem{definition}[theorem]{Definition}
\newtheorem{cons}[theorem]{Construction}
\newtheorem{lemma}[theorem]{Lemma}
\newtheorem{example}[theorem]{Example}
\newtheorem{obs}[theorem]{Observation}
\newtheorem{question}[theorem]{Question}
\newtheorem{remark}[theorem]{Remark}
\newtheorem{corollary}[theorem]{Corollary}
\newtheorem{setup}[theorem]{Set-up}
\newtheorem*{notation*}{Notation}
\begin{document}

\title[Symbolic powers of vertex cover ideals]{Symbolic powers of vertex cover ideals}

\author[S Selvaraja]{S Selvaraja}
	\email{selva.y2s@gmail.com}
	\address{The Institute of Mathematical Sciences, CIT campus, Taramani, Chennai, INDIA - 600113}

\begin{abstract}
 Let $G$ be a finite simple graph and $J(G)$ denote 
 its cover ideal in a polynomial ring over a field $\mathbb{K}$.
 In this paper,  we show  that all  symbolic powers of cover ideals of certain vertex decomposable graphs have
 linear quotients. 
 Using these results, we give various conditions on a subset $S$ of the vertices of $G$ so that 
 all symbolic powers of vertex cover ideals of $G \cup W(S)$, 
 obtained from $G$ by adding a whisker to each vertex in $S$, have linear quotients.
 For instance, if $S$ is a vertex cover of $G$, then all symbolic powers of $J(G \cup W(S))$ have linear quotients. 
Moreover, we compute the Castelnuovo-Mumford regularity  of symbolic powers of 
certain cover ideals.
 
 \end{abstract}
 \thanks{AMS Classification 2010: 13D02, 13F20}
\keywords{cover ideal, symbolic power, componentwise linear, vertex decomposable graph}
 \maketitle
 \section{Introduction}
 Symbolic powers of ideals have been studied intensely over the last two decades. 
 We refer the reader to \cite{DDAGHN} for a review of results in the literature.
Let $R=\mathbb{K}[x_1,\ldots,x_n]$ be the polynomial ring in $n$ variables over a field $\mathbb{K}$ and 
 $I$ be a squarefree monomial ideal in $R$. 
 The \emph{$k$-th symbolic power} of $I$, denoted by $I^{(k)}$, is the intersection of all primary components of 
 $I^k$ corresponding to minimal primes of $I$ (cf. \cite[Proposition 1.4.4]{Herzog'sBook}).
 In this paper we study the symbolic powers of cover ideals of graphs. 
 
 Let $G=(V(G),E(G))$ denote a finite, simple (no loops, no multiple edges), undirected graph with 
 vertex set $V(G) =\{x_1,\ldots,x_n\}$ and edge set $E(G)$.
 A { \it minimal vertex cover} of $G$ is a subset $C \subseteq V(G)$ such that each edge has at least one 
 vertex in $C$ and no proper subset of $C$ has the same property. 
 For a graph $G$, by identifying the vertices with variables in 
$R=\mathbb{K}[x_1,\ldots,x_n]$,  we associate squarefree monomial ideals, 
the {\it edge ideal} $I(G)=\left( x_ix_j \mid \{x_i,x_j\} \in  E(G) \right) $ and the {\it cover ideal} 
$J(G)= \left( \prod\limits_{x \in C}x \mid C \text{ is a minimal vertex cover of } G \right).$ 
The cover ideal of a graph  $G$ is the Alexander dual of its edge ideal, i.e.,
$J(G)=I(G)^\vee=\bigcap\limits_{\{x_i, x_j\}\in E(G)} (x_i,x_j)$. 
Recently, a dictionary between various combinatorial data of the 
graph $G$ and the algebraic properties of corresponding ideals $I(G)$ and $J(G)$ 
has been set up by various authors
(cf. \cite{BFH15, CamWalker, Nursel, FH, fv, GRV05,Herzog'sBook, HerHibiOhsugi, Hibi_cameronwalker,Fatemesh11,
Fatemesh14, Seyed, Fakhari, vill_cohen, Wood2}).
 
 Let  $I$ be a monomial ideal in $R$.  A homogeneous ideal $I$ is called {\it componentwise linear} 
 if for each $\ell$, the ideal generated
 by all degree $\ell$ elements of $I$ has a linear resolution. 
 Monomial ideals which are componentwise linear were introduced in \cite{HerHibi} by Herzog and Hibi 
 and have  strong combinatorial implications \cite{Herzog'sBook}. 
 Ideals with linear quotients were defined by Herzog and Takayama \cite{HerTak}
 in connection to their work on minimal free resolution of monomial ideals.
 A monomial ideal $I \subset R$ is said to have \textit{linear quotients}
 if there is an ordering $u_1 < \cdots < u_m$ on the minimal monomial generators of $I$ 
 such that for every $2 \leq i \leq m $ the ideal 
 $(u_1,\ldots,u_{i-1}) : (u_i)$ is generated by a subset of $\{x_1,\ldots, x_n\}$.
 If a monomial ideal has linear quotients, then it has componentwise linear quotients, \cite{JahanZheng}, 
 and hence it is componentwise linear. 
 Componentwise linear ideals have a number of algebraic and combinatorial properties that make them 
 interesting to study. 

A graph $G$ is said to be vertex decomposable if $\Delta(G)$ is a vertex decomposable, 
where $\Delta(G)$ denotes the independence complex of $G$ 
(see Section \ref{pre} for definition).
Vertex decomposability was first introduced by Provan and Billera \cite{ProvLouis},
in the case when all the maximal faces are of equal cardinality, and
extended to the arbitrary case by Bj\"orner and Wachs \cite{BjWachs}.
We have the chain of implications:
\begin{equation*}
 \text{vertex decomposable}\Longrightarrow \text{shellable} \Longrightarrow \text{sequentially
 Cohen-Macaulay,}
\end{equation*} 
where a graph $G$ is shellable if $\Delta(G)$ is a shellable
simplicial complex and $G$ is sequentially Cohen-Macaulay if $R/I(G)$
is sequentially Cohen-Macaulay. 
In \cite{eagon}, Eagon and Reiner proved that $G$ is Cohen-Macaulay if and only if 
$J(G)$ has a linear resolution. Thereafter, 
Herzog and Hibi \cite{HerHibi}  and Herzog, Reiner and Walker \cite{HerReiWelker}
 proved that 
 $G$ is sequentially Cohen-Macaulay  if and only if 
 $J(G)$  has componentwise linear. In general it is hard to prove that an ideal is componentwise linear. 
 We refer the reader to \cite{Herzog'sBook} and the references cited there for a review of results in the literature in this direction.
 In \cite{fv}, Francisco and Van Tuyl  proved that 
 if $G$ is a chordal graph, then $G$ is sequentially Cohen-Macaulay
 and hence $J(G)$ has componentwise linear. 
   In \cite{HerHibiOhsugi}, Herzog, Hibi, and Ohsugi gave a condition on homogeneous ideals 
   having the property  that all their powers are componentwise linear. 
   They also conjectured that all powers of the vertex cover ideal of chordal graphs are componentwise linear.
   There have been some attempts on proving all powers of cover ideals have componentwise linear for the
   subclass of chordal graphs, see \cite{HerHibi, Fatemesh11}. 
   In \cite{Fatemesh14}, Fatemesh gave a combinatorial condition on a graph which guarantees that all 
   powers of its vertex cover ideal are componentwise linear. 
   Recently, Nursel \cite{Nursel} proved that if $G$ is a $(C_4,2K_2)$-free graph,
   then $J(G)^k$ has componentwise linear for all $k \geq 1$.

   In this context it is natural to ask what happens when
   we consider the symbolic powers of cover ideals? More precisely, 
   given a (sequentially) Cohen-Macaulay graph, what can be said about the symbolic powers of its vertex cover ideal?
   In general, if $G$ is sequentially Cohen-Macaulay, then symbolic powers of $J(G)$ need not have 
 componentwise linear (Example \ref{ex:compnt}). 
   Recently, Fakhari \cite{Fakhari} proved that  if $G$ is a 
   Cohen-Macaulay and very well-covered graph,
   then  $J(G)^{(k)}$ has  linear quotients and hence it is componentwise linear.
 
 There has been a lot of work on how the combinatorial
 modification of the graphs would affect algebraic properties 
 of its edge ideals/cover ideals.  Generally, this question is interesting in the sense that 
 we may start with a graph with ``bad'' 
 algebraic properties, but with a slight modification, the edge ideal/cover ideal becomes much nicer.
For example, Villarreal proved that if $G$
is any graph, then $W(G)$ is Cohen-Macaulay, where $W(G)$ is
the graph obtained by adding a whisker to each vertex of $G$, \cite{vill_cohen}. 
Dochtermann and Engstr\"om \cite{DochEng09}
and Woodroofe \cite{Wood2} independently showed that $W(G)$ is a vertex decomposable graph.
Cook and Nagel \cite{CookNagel} generalized the whiskered idea and constructed the vertex clique-whiskered graph
$G^{\pi}$, see Definition \ref{cook},  and proved that $G^{\pi}$ is unmixed and vertex decomposable.
In \cite{BFH15}, Biermann et al.,  gave sufficient conditions on $S \subset V(G)$ such
that $G \cup W(S)$ is vertex decomposable, where 
$G∪\cup W(S)$ is the graph obtained from $G$ by adding a whisker at each vertex in $S$ (see also \cite{FH}).
 Later, Hibi et al., \cite{Hibi_cameronwalker} 
gave a generalization of Villarreal's result by showing that the graph obtained by attaching 
a complete graph to each vertex of a graph $G$ is unmixed and vertex decomposable.
In a different direction, several authors have studied similar phenomena (cf. \cite{Haj_yessemi, AFY15}).

 We consider the graph obtained by attaching a connected graph
 to some of the vertices of a graph. 
 Let $H$ be a graph and $\{x_{i_1},\ldots,x_{i_q}\} \subseteq V(H)$. The graph 
 $G$ is obtained from $H$ by attaching $\kk(x_{i_j})$ to $H$ at the $x_{i_j}$ for all
$1 \leq j \leq q$, where $\kk(x_{i_j})$ is a graph joining some complete graphs 
 at one common vertex $x_{i_j}$.
 In this paper, we prove that all symbolic powers of the vertex cover ideals of such graphs with additional hypothesis
 have linear quotients  and hence it is componentwise linear 
 (Theorem \ref{main-result2}, Theorem \ref{main-result1}). 
 The above results has a number of interesting consequence. For example, 
 Corollary \ref{CamWal-verdec} says that if $G$ is a Cameron-Walker graph, then $J(G)^{(k)}$ has  
 linear quotients for all $k \geq 1$. Also, we explore on how to add whiskers to a graph so that
 all symbolic powers of its cover ideal have linear quotients.
 We give some sufficient conditions on a subset $S$ of the vertices of $G$ so that 
 all symbolic powers of  $J(G \cup W(S))$  have linear quotients (Corollary \ref{main-cor}).   
 Let $G$ be any graph and $\pi$ be any clique vertex-partition of $G$. Also, we prove that
 all symbolic powers of  $J(G^{\pi})$ have
 linear quotients (Theorem \ref{main-clique}). 
 As an immediate consequence of the above results, 
 we compute the Castelnuovo-Mumford regularity of 
 symbolic powers of certain cover ideals (Corollary \ref{reg-cor}). 

 Our paper is organized as follows. In Section \ref{pre}, we collect the necessary notation, terminology
and some results that are used in the rest of the paper.
We prove, in Section \ref{tech}, several technical lemmas which are needed for the proof of 
our main results which appear in Section \ref{main}.
  \section{Preliminaries}\label{pre}
 In this section, we set up the basic definitions and notation needed for the main results. 
For a graph $G$, $V(G)$ and $E(G)$ denote the set of all
vertices and the set of all edges of $G$ respectively.
For $\{u_1,\ldots,u_r\}  \subseteq V(G)$, let $N_G(u_1,\ldots,u_r) = \{v \in V (G)\mid \{u_i, v\} \in E(G)~ 
\text{for some $1 \leq i \leq r$}\}$ be the set of neighbors of $u_1,\ldots,u_r$
and $N_G[u_1,\ldots,u_r]= N_G(u_1,\ldots,u_r) \cup \{u_1,\ldots,u_r\}$. 
The cardinality of $N_G(u)$ is called the \textit{degree} of $u$ in $G$ and is denoted by
$\deg_G(u)$. A subgraph $H \subseteq G$  is called {\it induced} if for $u, v
\in V(H)$, $\{u,v\} \in
E(H)$ if and only if $\{u,v\} \in E(G)$.
A subset
$X$ of $V(G)$ is called {\it independent} if there is no edge $\{x,y\} \in E(G)$ 
for $x, y \in X$. 
A {\it complete graph} is a graph in which each pair of graph vertices is connected by an edge. 
 A subset $U$ of $V(G)$ is said to be a \textit{clique} if the induced subgraph with vertex set $U$
is a complete graph.  
A \textit{simplicial vertex} of a graph $G$ is a vertex $x$ such that the neighbors of $x$ form a clique in
$G$. 
Note that if  $\deg_G(x)=1$, 
then $x$ is a simplicial vertex of $G$.  
 
 \begin{remark}
 Let $G$ be a graph. For $U \subseteq V(G)$, define $G \setminus U$ to
be the induced subgraph of $G$ on the vertex set $V(G) \setminus U$.  
Let $H$ be a subgraph of $G$ and $X \subseteq V(H)$, $Y \subseteq V(G)\setminus V(H)$. For the 
 notation, we shall use $H \setminus \{X \cup Y\}$ to also refer to 
$H \setminus X$.
 \end{remark}

 Let $G$ be a graph. For $S \subseteq V(G)$, let $G \cup W(S)$ denote the graph on the 
vertex set $V(G) \cup \{z_x \mid x \in S\}$ whose edge set is
$E(G \cup W(S))=E(G) \cup \Big\{ \{x,z_x\} \mid x \in S\Big\}$. 
An edge of the form $\{x, y\}$, where $N_G(y) = \{x\}$ is
called a \emph{whisker} of $G$.
 
 We recall the relevant background on simplicial complexes.  
 A \textit{simplicial complex} $\Delta$ on $V = \{x_1,\ldots,x_n\}$ is
a collection of subsets of $V$ such that: 
\begin{enumerate}
 \item[(i)] $\{x_i\}\in \Delta $ for $1 \leq i\leq n$, and
 \item[(ii)] if $F \in \Delta$ and $F' \subseteq F$, then $F' \in \Delta$.
\end{enumerate}
Elements of $\Delta$ are called the \textit{faces} of $\Delta$, and the maximal elements, with 
respect to inclusion, are called the \textit{facets}. 
The link of a face $F$ in
$\Delta$ is $\link_\Delta(F) = \{F' \mid F' \cup F \text{ is a face in
} \Delta, ~F' \cap F = \emptyset \}.$

A simplicial complex $\Delta$ is
recursively defined to be {\em vertex decomposable} if it is either a
simplex or else has some vertex $v$ so that 
\begin{enumerate}
  \item[(i)] both $\Delta \setminus v$ and $\link_\Delta (v)$ are vertex decomposable, and
  \item[(ii)] no face of $\link_\Delta (v)$ is a facet of $\Delta \setminus v$.
\end{enumerate}

The \textit{independence complex} of $G$, denoted by $\Delta(G)$, is the simplicial
complex on $V(G)$ with face set 
$\Delta(G)=\{F \subseteq V(G) \mid F \text{ is an independent set of $G$ } \}.$
A graph $G$ is said to be \textit{vertex decomposable} if $\Delta(G)$ is a
vertex decomposable simplicial complex. 
In \cite{Wood2}, Woodroofe translated the notion of vertex decomposable
for graphs as follows.
\begin{definition}\label{def-vertdecom} \cite[Lemma 4]{Wood2}
 A graph $G$ is recursively defined to be vertex decomposable if 
 $G$ is totally disconnected (with no edges) or if 
 \begin{enumerate}
  \item there is a vertex $x$ in $G$ such that $G \setminus x$ and 
  $G \setminus N_G[x]$ are both vertex decomposable, and
  \item no independent set in $G\setminus N_G[x]$ is a maximal independent set 
  in $G\setminus x$.
 \end{enumerate}
 \end{definition}

A \textit{shedding vertex} of $x$ is any vertex which satisfies 
either $\deg_G(x)=0$ or Condition (2) of Definition \ref{def-vertdecom}.

Let $G$ and $H$ be graphs. 
If $G$ and $H$ disjoint graphs (i.e., $V (G) \cap V (H) = \emptyset)$, we denote the disjoint union of 
$G$ and $H$ by $G\coprod H$.

\begin{corollary}\cite[Corollary 7(1) and Lemma 20]{Wood2}\label{cor-shedding} \mbox{}
\begin{enumerate}
 \item Any neighbor of a simplicial vertex of $G$ is a shedding vertex of $G$.
 \item Let $G$ and $H$ be two graphs.
Then $G \coprod H$ is vertex decomposable if and only if $G$ and $H$ are vertex decomposable.
\end{enumerate}
\end{corollary}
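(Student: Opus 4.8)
The plan is to prove the two parts separately by unwinding Definition \ref{def-vertdecom} directly; no machinery beyond that definition and basic facts about disjoint unions is needed. For part (1), the goal is to show that a neighbor of a simplicial vertex satisfies the shedding condition, and for part (2) to transfer the shedding condition back and forth across a disjoint union by induction on the number of vertices.

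For part (1), let $x$ be a simplicial vertex of $G$ and let $y \in N_G(x)$. Since $y$ has the neighbor $x$ we have $\deg_G(y) \geq 1$, so I only need to verify Condition (2) of Definition \ref{def-vertdecom}: no independent set of $G \setminus N_G[x']$ is a maximal independent set of $G \setminus y$ — more precisely, no independent set of $G \setminus N_G[y]$ is maximal in $G \setminus y$. Given such an independent set $S \subseteq V(G)\setminus N_G[y]$, the plan is to show $S \cup \{x\}$ is independent in $G \setminus y$ and strictly larger than $S$. Here $x \neq y$ (the graph is simple) and $x \in N_G[y]$, so $x \notin S$ while $x \in V(G\setminus y)$. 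If $x$ were adjacent to some $s \in S$ then $s \in N_G(x)$; but $y \in N_G(x)$ as well and $N_G(x)$ is a clique, so $s$ and $y$ would be adjacent (note $s \neq y$, since $y \in N_G[y]$ is not a vertex of $G \setminus N_G[y]$), forcing $s \in N_G(y)$ and contradicting $s \notin N_G[y]$. Hence $S$ is not maximal in $G \setminus y$, so $y$ is a shedding vertex.

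For part (2), both implications I would prove by induction on $|V(G)| + |V(H)|$, using the identities $(G \coprod H)\setminus v = (G\setminus v)\coprod H$ and $(G\coprod H)\setminus N_{G\coprod H}[v] = (G\setminus N_G[v])\coprod H$ for $v \in V(G)$ (and symmetrically for $v \in V(H)$), together with the observation that an independent set of $A \coprod B$ is exactly $S_A \cup S_B$ with $S_A$ independent in $A$, $S_B$ independent in $B$, and is maximal precisely when both $S_A$ and $S_B$ are maximal. For ($\Leftarrow$): if $G\coprod H$ has no edges it is totally disconnected; otherwise, say $G$ has an edge, pick a shedding vertex $v$ of $G$ with $G\setminus v$ and $G\setminus N_G[v]$ vertex decomposable, note the two deletions of $v$ from $G\coprod H$ are vertex decomposable by induction, and check that an independent set of $(G\setminus N_G[v])\coprod H$ maximal in $(G\setminus v)\coprod H$ would have its $G$-part maximal in $G\setminus v$ and contained in $G\setminus N_G[v]$, contradicting that $v$ sheds $G$; hence $v$ sheds $G\coprod H$. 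For ($\Rightarrow$): if $G\coprod H$ is totally disconnected so are $G$ and $H$; otherwise pick a shedding vertex $v$ of $G\coprod H$, say $v\in V(G)$, deduce from the identities and the inductive hypothesis that $G\setminus v$, $G\setminus N_G[v]$, and $H$ are all vertex decomposable, and check that $v$ sheds $G$ by combining a hypothetical bad independent set of $G$ with any facet of $\Delta(H)$ to produce a bad independent set of $G\coprod H$.

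The main obstacle is the bookkeeping in part (2): one must keep careful track of the fact that the $H$-component of a maximal independent set of a disjoint union is itself maximal in $H$ (and the analogous statement for $G$), since this is exactly what makes the shedding condition descend and lift cleanly in both directions; aside from that, everything is a routine unwinding of definitions. As an alternative to the inductive argument, part (2) also follows from the general fact that the join $\Delta(G) * \Delta(H) = \Delta(G \coprod H)$ is vertex decomposable if and only if both $\Delta(G)$ and $\Delta(H)$ are, but I would keep the self-contained graph-theoretic proof above.
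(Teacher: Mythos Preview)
Your proof is correct. Note, however, that the paper does not actually prove this corollary: it is quoted from Woodroofe's paper \cite{Wood2} (specifically Corollary~7(1) and Lemma~20 there) and stated without proof, so there is nothing in the present paper to compare your argument against. Your self-contained argument, unwinding Definition~\ref{def-vertdecom} directly for part~(1) and inducting on the number of vertices via the identities $(G\coprod H)\setminus v=(G\setminus v)\coprod H$ and $(G\coprod H)\setminus N_{G\coprod H}[v]=(G\setminus N_G[v])\coprod H$ for part~(2), is essentially the standard route and is sound; the minor typo ``$G\setminus N_G[x']$'' you immediately correct to $G\setminus N_G[y]$, and the bookkeeping on maximal independent sets in a disjoint union is handled correctly in both directions.
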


 For a graph $G$, the simplicial complex $\Delta_G$ with complete subgraphs (cliques) of $G$ as its faces 
is called the \textit{clique complex} of $G$.
 \begin{definition}
 A star complete, denoted by $\kk(x)$,  is a graph joining some complete graphs  at one common vertex $x$.
 \end{definition}
 
 Let $F_1,\ldots ,F_t$ be the facets of $\Delta_{\kk(x)}$. If $|F_i| \geq 3$ for all $1 \leq i \leq t$, then 
 $\kk(x)$ is said to be   {\it pure star complete graph.}
  Otherwise,  $\kk(x)$ is  {\it non-pure star complete graph}.
The graph given below on the right is a pure star complete graph while the graph on the 
left is not a pure star complete graph 
as $\Delta_{\kk(a)}$ has a facet with cardinality 2.


\begin{figure}[H]
\begin{tikzpicture}[scale=0.7]
\draw (1,4)-- (2,4);
\draw (1,4)-- (2,3);
\draw (2,4)-- (2,3);
\draw (3,4)-- (2,3);
\draw (4,4)-- (2,3);
\draw (6.34,4.02)-- (7.16,2.76);
\draw (7.34,4.02)-- (7.16,2.76);
\draw (6.34,4.02)-- (7.34,4.02);
\draw (7.16,2.76)-- (8.54,3.82);
\draw (10,4)-- (7.16,2.76);
\draw (8.48,4.7)-- (8.54,3.82);
\draw (8.48,4.7)-- (10,4);
\draw (8.54,3.82)-- (10,4);
\draw (8.48,4.7)-- (7.16,2.76);
\draw (-2.12,2.4) node[anchor=north west] {non-pure star complete};
\draw (6.32,2.4) node[anchor=north west] {pure star complete};
\begin{scriptsize}
\fill [color=black] (2,3) circle (1.5pt);
\draw[color=black] (2.26,2.96) node {$a$};
\fill [color=black] (1,4) circle (1.5pt);
\fill [color=black] (2,4) circle (1.5pt);
\fill [color=black] (3,4) circle (1.5pt);
\fill [color=black] (4,4) circle (1.5pt);
\fill [color=black] (7.16,2.76) circle (1.5pt);
\draw[color=black] (7.6,2.76) node {$b$};
\fill [color=black] (7.34,4.02) circle (1.5pt);
\fill [color=black] (6.34,4.02) circle (1.5pt);
\fill [color=black] (8.54,3.82) circle (1.5pt);
\fill [color=black] (10,4) circle (1.5pt);
\fill [color=black] (8.48,4.7) circle (1.5pt);
\end{scriptsize}
\end{tikzpicture}
\caption{star complete graph}
\end{figure}
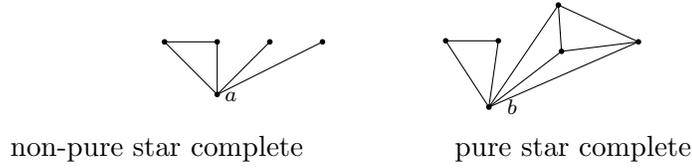

Let $M$ be a graded $R$ module. For non-negative
integers $i, j$, let $\beta_{i,j}(M)$ denote the $(i,j)$-th \emph{graded Betti
number} of $M$. The 
\emph{Castelnuovo-Mumford regularity} of $M$, denoted by $\reg(M)$, is defined as 
$\reg(M)=\max \{j-i \mid \beta_{i,j}(M) \neq 0\}$. 

Polarization is a process that creates a squarefree monomial ideal (in
a possibly different polynomial ring) from a given monomial ideal,
\cite[Section 1.6]{Herzog'sBook}. If $I$ is a monomial ideal in $R$, then the polarization of
$I$ denoted by $\widetilde{I} \subseteq \widetilde{R}$.
In this paper, we repeatedly use the following properties of the
polarization, namely:
\begin{corollary}\label{pol_reg}  Let $I$ be a monomial ideal in 
$R.$ Then
\begin{enumerate}
 \item \cite[Corollary 1.6.3]{Herzog'sBook} for 
 all $\ell,j$, $\beta_{\ell,j}(R/I)=\beta_{\ell,j}(\widetilde{R} / \widetilde {I})$.
 \item \cite[Lemma 3.5]{Fakhari} $I$ has linear quotients if and only if $\widetilde{I}$ has linear
quotients.
\end{enumerate}
\end{corollary}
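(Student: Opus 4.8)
The plan is essentially bookkeeping: the statement is the conjunction of two results already in the literature, so the ``proof'' is to cite \cite[Corollary 1.6.3]{Herzog'sBook} for part (1) and \cite[Lemma 3.5]{Fakhari} for part (2) and observe that nothing further is needed. Still, let me indicate how each half would be argued from scratch, since both are short.

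For part (1) I would exploit the standard fact that polarization is undone by a regular sequence of linear forms. Writing $\widetilde{R}$ as $R$ adjoined the new variables introduced by polarization, there is a sequence $\theta_1,\dots,\theta_s$ of differences of variables (each of the form $x_i^{(a)}-x_i^{(b)}$) which is a regular sequence on $\widetilde{R}/\widetilde{I}$ with $\widetilde{R}/(\widetilde{I}+(\theta_1,\dots,\theta_s))\cong R/I$. Reduction of a finitely generated graded module modulo a regular sequence of linear forms leaves the graded Betti numbers unchanged, because a minimal graded free resolution of $R/I$ over $R$ is obtained by tensoring a minimal graded free resolution of $\widetilde{R}/\widetilde{I}$ over $\widetilde{R}$ with $R$. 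Hence $\beta_{\ell,j}(R/I)=\beta_{\ell,j}(\widetilde{R}/\widetilde{I})$ for all $\ell,j$; in particular $\reg(R/I)=\reg(\widetilde{R}/\widetilde{I})$ and $\pd(R/I)=\pd(\widetilde{R}/\widetilde{I})$, which is how the corollary will be used later.

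For part (2) I would use that there is a degree-preserving bijection between the minimal monomial generators $G(I)$ of $I$ and those of $\widetilde{I}$, sending $u\mapsto\widetilde{u}$, provided one polarizes all generators simultaneously inside the same $\widetilde{R}$ (with a fixed common bound on exponents). Under this convention polarization is compatible with the lcm/quotient operations that compute colon ideals, so for any ordering $u_1<\cdots<u_m$ of $G(I)$ the ideal $(\widetilde{u_1},\dots,\widetilde{u_{i-1}}):(\widetilde{u_i})$ is generated by variables if and only if $(u_1,\dots,u_{i-1}):(u_i)$ is. Consequently $u_1<\cdots<u_m$ is a linear-quotients order for $I$ exactly when $\widetilde{u_1}<\cdots<\widetilde{u_m}$ is one for $\widetilde{I}$, giving the equivalence.

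The main obstacle is genuinely minor here: at the level of this corollary there is no real difficulty, only the need, in part (2), to set up polarization with a common exponent bound so that the generator bijection is compatible with taking colon ideals — which is precisely the content of \cite[Lemma 3.5]{Fakhari}. The more interesting applications come later, when part (1) transports regularity and part (2) transports the linear-quotients property between $J(G)^{(k)}$ and its polarization.
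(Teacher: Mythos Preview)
Your proposal is correct and matches the paper's treatment: the paper gives no proof at all for this corollary, simply recording the two citations, which is exactly what you identify as the actual content. Your additional sketches of how each half is proved from scratch are accurate and go beyond what the paper provides, but they are not needed here.
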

 
In the study of symbolic powers of cover ideals, Fakhari constructed a new graph $G_k$ whose cover ideal is strongly related to the 
$k$-th symbolic power of cover ideal of $G$ \cite{Fakhari}.
This construction  has proved to be quite powerful, which we shall make use of often.

\begin{cons}\label{construction}
 Let $G$ be a graph with vertex set $V(G)= \{x_1,\ldots,x_n\}$ and $k \geq 1$
be an integer. We define the new graph $G_k$ on new vertices
\[
 V(G_k)=\{x_{i,p} \mid 1 \leq i \leq n \text{ and } 1 \leq p \leq k\},
\]
and the edge set of $G_k$ is
\[
 E(G_k)=\Big\{\{x_{i,p},x_{j,q}\} \mid \{x_i,x_j\} \in E(G) \text{ and } p+q \leq k+1\Big\}.
\]
\end{cons}
Throughout this paper, $G_k$ denotes the graph as in Construction \ref{construction}.
The following observation is an immediate consequence of the construction:
\begin{obs}
If $G=H \coprod L$, then $G_k=H_k \coprod L_k$ for all $k \geq 1$.
\end{obs}

  The following lemma, due to Fakhari, is used repeatedly throughout this paper
\begin{lemma}\cite[Lemma 3.4]{Fakhari}\label{fak-result}
 Let $G$ be a graph. For every integer $k \geq 1$, the ideal $\widetilde{(J(G)^{(k)})}$ is the cover ideal of $G_k$.
\end{lemma}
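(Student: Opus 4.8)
The plan is to prove the equality of monomial ideals $\widetilde{J(G)^{(k)}} = J(G_k)$ by setting up a bijection between their minimal monomial generators, using the standard fact that the minimal generators of a polarization $\widetilde{I}$ are exactly the polarizations of the minimal generators of $I$ (and that polarization respects divisibility, so no redundancy is introduced).

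First I would record the relevant description of symbolic powers. Since $J(G) = \bigcap_{\{x_i,x_j\}\in E(G)} (x_i,x_j)$, symbolic powers commute with this intersection of minimal primes, and $(x_i,x_j)^{(k)} = (x_i,x_j)^k$ because $(x_i,x_j)$ is a complete intersection, so $J(G)^{(k)} = \bigcap_{\{x_i,x_j\}\in E(G)} (x_i,x_j)^k$. Hence a monomial $x^{\mathbf{a}} = x_1^{a_1}\cdots x_n^{a_n}$ lies in $J(G)^{(k)}$ if and only if $a_i + a_j \geq k$ for every edge $\{x_i,x_j\}$ of $G$, and it is a \emph{minimal} generator precisely when, in addition, every index $i$ with $a_i \geq 1$ admits an edge $\{x_i,x_j\}$ with $a_i + a_j = k$; in particular each such exponent is $\leq k$. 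Its polarization is $\widetilde{x^{\mathbf{a}}} = \prod_{i\,:\,a_i\geq 1} x_{i,1}x_{i,2}\cdots x_{i,a_i}$, a squarefree monomial of $\widetilde{R}$ with $\supp(\widetilde{x^{\mathbf{a}}}) = C_{\mathbf{a}} := \{x_{i,p} \mid 1\leq p\leq a_i\} \subseteq V(G_k)$.

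The heart of the matter is the dictionary between minimal generators of $J(G)^{(k)}$ and minimal vertex covers of $G_k$, which I would prove in two steps. Step 1: if $x^{\mathbf a}$ is a minimal generator of $J(G)^{(k)}$, then $C_{\mathbf a}$ is a minimal vertex cover of $G_k$, so $\widetilde{x^{\mathbf a}}$ is a minimal generator of $J(G_k)$. It is a cover because an edge $\{x_{i,p},x_{j,q}\}$ of $G_k$ satisfies $p+q\leq k+1$, so one cannot have $p>a_i$ and $q>a_j$ simultaneously (that would force $p+q\geq a_i+a_j+2\geq k+2$); it is minimal because, given $x_{i,p}\in C_{\mathbf a}$, choosing an edge of $G$ with $a_i+a_j=k$ and setting $q := k+1-p$ produces an edge $\{x_{i,p},x_{j,q}\}$ of $G_k$ with $x_{j,q}\notin C_{\mathbf a}$ (one checks $1\leq q\leq k$ and $q>a_j$), so $x_{i,p}$ cannot be dropped. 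Step 2: every minimal vertex cover $C$ of $G_k$ equals some $C_{\mathbf a}$. The key structural observation is that $C$ is \emph{downward closed in the second index}: if $x_{i,p}\in C$ and $p'<p$ then $x_{i,p'}\in C$, for otherwise every $G_k$-neighbor of $x_{i,p}$ is also a neighbor of $x_{i,p'}$, hence lies in $C$, and $x_{i,p}$ could be deleted. Thus, putting $a_i := \max\{p \mid x_{i,p}\in C\}$ (and $a_i := 0$ if there is none), we get $C = C_{\mathbf a}$; one then verifies $x^{\mathbf a}\in J(G)^{(k)}$ (if some edge had $a_i+a_j\leq k-1$, then $\{x_{i,a_i+1},x_{j,a_j+1}\}$ would be an uncovered edge of $G_k$) and that it is a minimal generator (deleting $x_{i,a_i}$ from $C$ exposes an edge of $G_k$ that forces $a_i+a_j=k$). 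Combining the two steps, $x^{\mathbf a}\mapsto\widetilde{x^{\mathbf a}}$ is a bijection between the minimal generators of $J(G)^{(k)}$ and those of $J(G_k)$, so $\widetilde{J(G)^{(k)}} = J(G_k)$.

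I expect the only genuinely structural point to be the downward-closedness of minimal vertex covers of $G_k$ in Step 2: this is what guarantees that a minimal cover comes from an honest exponent vector and is therefore realized as a polarization. The rest --- the symbolic power formula for cover ideals, the generator-wise behavior of polarization, and the index bookkeeping (membership checks and the ranges $1\leq q\leq k$) --- is routine.
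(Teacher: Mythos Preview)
Your proof is correct. Note, however, that the paper does not prove this lemma at all: it is quoted verbatim as \cite[Lemma 3.4]{Fakhari} and used as a black box, so there is no ``paper's own proof'' to compare against. Your argument is the natural one and matches in spirit what is done in Fakhari's original paper: characterize membership in $J(G)^{(k)}=\bigcap_{\{x_i,x_j\}\in E(G)}(x_i,x_j)^k$ by the inequalities $a_i+a_j\ge k$, observe that polarization sends $x^{\mathbf a}$ to the squarefree monomial supported on the ``interval'' set $C_{\mathbf a}=\{x_{i,p}\mid 1\le p\le a_i\}$, and then check that such interval sets are exactly the minimal vertex covers of $G_k$. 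The downward-closedness observation you single out is indeed the key structural point; everything else is bookkeeping, and your index checks (e.g.\ $1\le q\le k$ and $a_i\le k$) are all in order.
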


 \section{Vertex decomposable graph}\label{tech}
 Our aim in this section is to prove that 
 $G_k$ is a vertex decomposable graph for all $k \geq 1$ when $G$ is a star complete graph.
 For this purpose, we need to get more details about the structure of the graph $G_k$. 
 As a first step towards this, we describe the simplicial vertices of $G_k$.
 
\begin{lemma}\label{simplicial-vertex}
Let $G$ be a graph. If $x_i$ is a simplicial vertex of $G$, then $x_{i,k}$ is a simplicial 
vertex of $G_k$ for all $k \geq 1$.
In particular, if $x_l \in N_G(x_i)$, then 
$x_{l,1}$ is a shedding vertex of $G_k$ for all $k \geq 1$.
\end{lemma}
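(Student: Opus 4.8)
The plan is to compute the neighborhood of $x_{i,k}$ in $G_k$ explicitly and then check directly that it is a clique. First I would observe that, by the definition of the edge set in Construction \ref{construction}, a vertex $x_{j,q}$ is adjacent to $x_{i,k}$ in $G_k$ precisely when $\{x_i,x_j\}\in E(G)$ and $k+q\le k+1$; the second condition forces $q=1$. Hence
\[
 N_{G_k}(x_{i,k})=\{x_{j,1}\mid x_j\in N_G(x_i)\}.
\]

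Next I would invoke the hypothesis that $x_i$ is a simplicial vertex of $G$, i.e.\ that $N_G(x_i)$ is a clique of $G$. Given two distinct vertices $x_{j,1},x_{l,1}$ of $N_{G_k}(x_{i,k})$, we have $x_j,x_l\in N_G(x_i)$, so $\{x_j,x_l\}\in E(G)$; since $1+1=2\le k+1$ for every $k\ge 1$, the pair $\{x_{j,1},x_{l,1}\}$ satisfies both conditions of Construction \ref{construction} and therefore lies in $E(G_k)$. Thus the vertices in $N_{G_k}(x_{i,k})$ induce a complete subgraph of $G_k$, which is exactly the assertion that $x_{i,k}$ is a simplicial vertex of $G_k$. (If $x_i$ is isolated in $G$, the neighborhood above is empty and the conclusion is trivial.)

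For the ``in particular'' clause: if $x_l\in N_G(x_i)$, then the displayed equality gives $x_{l,1}\in N_{G_k}(x_{i,k})$, so $x_{l,1}$ is a neighbor of the simplicial vertex $x_{i,k}$ of $G_k$; Corollary \ref{cor-shedding}(1) then yields that $x_{l,1}$ is a shedding vertex of $G_k$. All of this holds for every $k\ge 1$.

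I do not anticipate a genuine obstacle here. The only points requiring care are the bookkeeping of the two indices together with the inequality $p+q\le k+1$, and the harmless degenerate cases where the simplicial vertex has degree $0$ or $1$. Conceptually the lemma just records that a ``level-$k$'' copy of a vertex sees only ``level-$1$'' copies of its $G$-neighbors, so the clique structure around $x_i$ is inherited intact by $x_{i,k}$.
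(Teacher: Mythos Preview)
Your proposal is correct and follows essentially the same argument as the paper: compute $N_{G_k}(x_{i,k})$ from Construction \ref{construction}, observe it consists of the first-level copies of $N_G(x_i)$ and hence inherits the clique structure, and then apply Corollary \ref{cor-shedding}(1). The only difference is that you spell out the inequality $1+1\le k+1$ explicitly, which the paper leaves implicit.
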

\begin{proof}
 If $N_G(x_i)=\{x_{j_1},\ldots,x_{j_r}\}$, then by Construction \ref{construction}, 
 $N_{G_k}(x_{i,k})=\{x_{j_1,1},\ldots,x_{j_r,1}\}$.
 Since the neighbors of $x_i$ form a clique, the neighbors of $x_{i,k}$ form a clique.
  Hence $x_{i,k}$ is a simplicial vertex of $G_k$.
 By Corollary \ref{cor-shedding}(1), $x_{l,1}$ is a shedding vertex of $G_k$.
\end{proof}
The following lemmas further reveals the structure of $G_k$.
\begin{lemma}\label{technical-le}
 Let $H$ be a graph with vertices $V(H)=\{x_1,\ldots,x_n\}$. The graph $G=H(K_m)$ is obtained 
 from $H$ by attaching $K_m$ to $H$ at the vertex $x_1$, where $K_m$ is a complete
 graph with vertices $\{y_1=x_1,y_2,\ldots,y_m\}$. 
 \begin{enumerate}
  \item If $m \geq 3$, then $y_{i,1}$ is a shedding vertex of $G_k \setminus \{y_{1,1},\ldots,y_{i-1,1}\}$
  for all $1 \leq i \leq m$.
  \item If $m=2$, then $y_{1,i}$ is a shedding vertex of $G_k \setminus \{y_{1,1},\ldots,y_{1,i-1}\}$ 
  for all $1 \leq i \leq k$.
 \end{enumerate}
\end{lemma}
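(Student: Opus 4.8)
The plan is to verify the shedding-vertex condition directly, using the combinatorial description of $G_k$ from Construction \ref{construction} together with Lemma \ref{simplicial-vertex} and Corollary \ref{cor-shedding}(1). In both cases the vertex in question is a neighbor (in $G_k$) of a simplicial-type vertex, so the work is mostly in identifying that simplicial vertex inside the correct induced subgraph and in checking that deleting the earlier vertices of the list does not destroy the relevant clique structure.

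For part (1), assume $m\geq 3$. I would first record that in $G$ the vertices $y_2,\ldots,y_m$ are simplicial: the neighbors of $y_j$ ($j\geq 2$) are exactly $\{y_1,\ldots,y_m\}\setminus\{y_j\}$, which is a clique since $K_m$ is complete. Fix $i$ with $1\leq i\leq m$ and set $G' = G_k \setminus \{y_{1,1},\ldots,y_{i-1,1}\}$. By Construction \ref{construction}, $N_{G_k}(y_{i,1})$ consists of all $y_{\ell,q}$ with $\{y_i,y_\ell\}\in E(G)$ and $1+q\leq k+1$, i.e.\ $N_{G_k}(y_{i,1})=\{\,y_{\ell,q}\mid y_\ell\in N_G(y_i),\ 1\leq q\leq k\,\}$; since $m\geq 3$, among the indices $\ell$ with $y_\ell\in N_G(y_i)$ there is at least one $\ell_0\in\{2,\ldots,m\}\setminus\{i\}$ (here I use $m\geq 3$ and, in the boundary case $i\leq m$, that $\{y_1,\ldots,y_m\}$ has at least two vertices other than $y_i$ when $i\geq 2$, and $\{y_2,\ldots,y_m\}$ has at least two vertices when $i=1$). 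Then I claim $y_{\ell_0,k}$ is a simplicial vertex of $G'$: by Lemma \ref{simplicial-vertex} it is simplicial in $G_k$, its neighbors are $\{\,y_{\ell,1}\mid y_\ell\in N_G(y_{\ell_0})\,\}$ which all lie in the $K_m$-part, and—crucially—the only vertices removed in passing to $G'$ are among $y_{1,1},\ldots,y_{i-1,1}$, which are neighbors of $y_{\ell_0,k}$ but removing vertices from a clique leaves a clique. Hence the neighborhood of $y_{\ell_0,k}$ in $G'$ is still a clique, so $y_{\ell_0,k}$ is simplicial in $G'$. Finally, $y_{i,1}$ is a neighbor of $y_{\ell_0,k}$ in $G'$ (since $\{y_i,y_{\ell_0}\}\in E(G)$ and $1+k\leq k+1$, and neither $y_{i,1}$ nor $y_{\ell_0,k}$ was deleted—note $i\notin\{1,\ldots,i-1\}$), so Corollary \ref{cor-shedding}(1) applied to $G'$ gives that $y_{i,1}$ is a shedding vertex of $G'$.

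For part (2), assume $m=2$, so $K_2$ contributes the single whisker edge $\{y_1,y_2\}=\{x_1,y_2\}$ with $N_G(y_2)=\{x_1\}$; thus $y_2$ is a simplicial vertex of $G$ (it has degree $1$). Fix $i$ with $1\leq i\leq k$ and set $G'' = G_k\setminus\{y_{1,1},\ldots,y_{1,i-1}\}$. By Construction \ref{construction}, $N_{G_k}(y_{2,q})=\{\,y_{1,q'}\mid q+q'\leq k+1\,\}=\{y_{1,1},\ldots,y_{1,k+1-q}\}$, and these sets are totally ordered by inclusion as $q$ varies; in particular each $N_{G_k}(y_{2,q})$ is contained in the clique-free set $\{y_{1,1},\ldots,y_{1,k}\}$, and by Lemma \ref{simplicial-vertex} each $y_{2,q}$ is a simplicial vertex of $G_k$ (trivially, since its neighborhood is an independent set, being pairwise non-adjacent copies of $y_1$—$y_1$ has no loop). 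I want a simplicial vertex of $G''$ adjacent to $y_{1,i}$. Take $y_{2,k+1-i}$: its $G_k$-neighborhood is $\{y_{1,1},\ldots,y_{1,i}\}$, of which $G''$ deletes $y_{1,1},\ldots,y_{1,i-1}$, leaving exactly $\{y_{1,i}\}$; a singleton is a clique, so $y_{2,k+1-i}$ is a simplicial vertex of $G''$ with $y_{1,i}$ as its (unique, degree-one) neighbor. By Corollary \ref{cor-shedding}(1) applied to $G''$, $y_{1,i}$ is a shedding vertex of $G''$, as claimed.

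The main obstacle is bookkeeping rather than depth: one must be careful that the vertex deletions prescribed in the list ($y_{1,1},\ldots,y_{i-1,1}$ in (1), $y_{1,1},\ldots,y_{1,i-1}$ in (2)) do not remove the candidate simplicial vertex itself and do not disturb its neighborhood's clique property in the wrong way, and in part (1) one must check that a suitable index $\ell_0\in\{2,\ldots,m\}$ distinct from $i$ always exists under the hypothesis $m\geq 3$ (this is exactly where $m\geq 3$, rather than $m\geq 2$, is needed). Once the right simplicial vertex is pinned down in each case, the statement follows immediately from Corollary \ref{cor-shedding}(1); there is no induction or resolution-theoretic input required beyond Lemma \ref{simplicial-vertex}.
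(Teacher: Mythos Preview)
Your proof is correct and follows essentially the same approach as the paper: in both parts you locate a simplicial vertex of the relevant induced subgraph (some $y_{\ell_0,k}$ with $\ell_0\in\{2,\ldots,m\}\setminus\{i\}$ in part~(1), and $y_{2,k+1-i}$ in part~(2)) and then invoke Corollary~\ref{cor-shedding}(1), exactly as the paper does. The only cosmetic difference is that the paper fixes $\ell_0=m$ for $i\leq m-1$ and chooses $\ell_0\in\{2,\ldots,m-1\}$ separately for $i=m$, whereas you treat these uniformly.
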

\begin{proof}
 (1) Since $y_m$ is a simplicial vertex of $G$, by Lemma \ref{simplicial-vertex}, $y_{m,k}$ is a 
 simplicial vertex of $G_k$ and hence by Corollary \ref{cor-shedding}(1), $y_{i,1}$ is a
 shedding vertex of $G_k \setminus \{y_{1,1},\ldots,y_{i-1,1}\}$ for all $1 \leq i \leq m-1$.
 Note that $N_{G_k}(y_{j,k})=\{y_{1,1},\ldots,y_{m,1}\}$ and 
 $\deg_{G_k \setminus \{y_{1,1},\ldots,y_{m-1,1}\}}
 (y_{j,k})=1$ for all $2 \leq j \leq m-1$.  Since $\{y_{m,1},y_{j,k}\} \in E(G_k)$, by Corollary \ref{cor-shedding}(1), 
 $y_{m,1}$ is a shedding vertex of  $G_k \setminus \{y_{1,1},\ldots,y_{m-1,1}\}$.
 \vskip 0.3mm \noindent
 (2)  Note that  $N_{G_k \setminus \{y_{1,1},\ldots,y_{1,i-1}\}}(y_{2,k-(i-1)})=\{y_{1,i}\}$ 
 for all $1 \leq i \leq k$.  Therefore, 
 $y_{2,k-(i-1)}$ is a simplicial vertex of $G_k \setminus \{y_{1,1},\ldots,y_{1,i-1}\}$. 
 Hence by Corollary \ref{cor-shedding}(1), $y_{1,i}$ is a shedding vertex of  $G_k \setminus \{y_{1,1},\ldots,y_{1,i-1}\}$.
 \end{proof}

 Two graphs $G=(V(G),E(G))$ and $H=(V(H),E(H))$ are said to be \textit{isomorphic} (and 
 written as $G \simeq H$) if and only if there exists a 1-1 and onto function 
 $\phi: V(G) \longrightarrow V(H)$ such that $\{u,v\} \in E(G)$ if and only if
 $\{\phi(u),\phi(v)\} \in E(H)$.

\begin{lemma}\label{tech_lemma-cons}
 Let $G$ be a graph with vertices $V(G)=\{x_1,\ldots,x_n\}$. Then for all $k \geq 2$,
 \begin{enumerate}
 \item for any $\{x_{i_1},\ldots,x_{i_r}\} \subseteq V(G)$, 
 $$G_k \setminus \{x_{i_1,1},\ldots,x_{i_1,k},\ldots,x_{i_r,1},\ldots,x_{i_r,k}\} :=
 (G \setminus \{x_{i_1},\ldots,x_{i_r}\})_k.$$
  \item $ G_k \setminus \{x_{1,1},\ldots,x_{n,1}\} \simeq G_{k-2} \cup \{x_{1,k},\ldots,x_{n,k}\},$ 
where $G_0$ is the empty graph.
\item  $G_k \setminus N_{G_k}[x_{j,1}] \simeq (G \setminus N_G[x_j])_k \cup \{x_{j,2},\ldots,x_{j,k}\} $
for all $1 \leq j \leq n$.
 \end{enumerate} 
\end{lemma}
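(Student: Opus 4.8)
The plan is to prove each of the three isomorphisms by exhibiting an explicit vertex bijection and checking it preserves edges, using Construction \ref{construction} directly. All three are essentially bookkeeping arguments about which pairs $(i,p)$ survive the deletion and how the surviving adjacency condition $p+q\le k+1$ transforms.

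For part (1), the observation is that deleting \emph{all} copies $x_{i_s,1},\ldots,x_{i_s,k}$ of the vertices $x_{i_s}$ ($1\le s\le r$) from $G_k$ leaves exactly the vertex set $\{x_{i,p} \mid x_i\notin\{x_{i_1},\ldots,x_{i_r}\},\ 1\le p\le k\}$, which is literally the vertex set of $(G\setminus\{x_{i_1},\ldots,x_{i_r}\})_k$. Since $G_k$ is an induced-subgraph construction on those indices and the edge condition $\{x_i,x_j\}\in E(G)$, $p+q\le k+1$ only ever refers to edges of $G$ between surviving vertices, which are precisely the edges of $G\setminus\{x_{i_1},\ldots,x_{i_r}\}$, the induced subgraph on the remaining vertices equals $(G\setminus\{x_{i_1},\ldots,x_{i_r}\})_k$. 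This one I would state as essentially immediate from the definitions.

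For part (2), deleting $x_{1,1},\ldots,x_{n,1}$ from $G_k$ leaves vertices $x_{i,p}$ with $2\le p\le k$. I would split these into the ``top'' copies $\{x_{1,k},\ldots,x_{n,k}\}$ and the ``middle'' copies $x_{i,p}$ with $2\le p\le k-1$. The key point: an edge $\{x_{i,p},x_{j,q}\}$ with $p,q\ge 2$ requires $p+q\le k+1$, so $p,q\le k-1$; hence no surviving edge touches a top copy $x_{i,k}$, so $\{x_{1,k},\ldots,x_{n,k}\}$ are isolated — this is exactly the disjoint part $\cup\{x_{1,k},\ldots,x_{n,k}\}$. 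For the middle copies, I would define $\phi(x_{i,p}) = x_{i,p-1}$ for $2\le p\le k-1$, mapping onto the vertex set of $G_{k-2}$ (copies indexed $1,\ldots,k-2$). Then $\{x_{i,p},x_{j,q}\}$ is an edge of $G_k$ (with $2\le p,q\le k-1$) iff $\{x_i,x_j\}\in E(G)$ and $p+q\le k+1$ iff $\{x_i,x_j\}\in E(G)$ and $(p-1)+(q-1)\le k-1=(k-2)+1$, which is exactly the condition for $\{x_{i,p-1},x_{j,q-1}\}$ to be an edge of $G_{k-2}$. So $\phi$ together with the identity on the isolated top copies is the required isomorphism. (When $k=2$, $G_0$ is empty and the statement reads that all of $\{x_{1,1},\ldots,x_{n,1}\}$-deleted graph is $\{x_{1,2},\ldots,x_{n,2}\}$ with no edges, consistent with the $p,q\le k-1=1$ forcing there to be no surviving edges at all.)

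For part (3), fix $j$ and recall $N_{G_k}[x_{j,1}]$: by Construction \ref{construction}, a vertex $x_{l,q}$ is adjacent to $x_{j,1}$ iff $\{x_j,x_l\}\in E(G)$ and $1+q\le k+1$, i.e. for \emph{every} $q\in\{1,\ldots,k\}$; so $N_{G_k}[x_{j,1}]=\{x_{l,q}\mid x_l\in N_G[x_j],\ 1\le q\le k\}$ — all copies of $x_j$ and of its $G$-neighbors. Deleting this set leaves, first, all copies $x_{l,q}$ with $x_l\notin N_G[x_j]$, which by part (1) induce $(G\setminus N_G[x_j])_k$, and second, the copies $x_{j,2},\ldots,x_{j,k}$ of $x_j$ itself. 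It remains to check these latter copies are isolated in the deleted graph and have no edges to the $(G\setminus N_G[x_j])_k$ part: any neighbor of $x_{j,q}$ in $G_k$ has the form $x_{l,q'}$ with $x_l\in N_G(x_j)\subseteq N_G[x_j]$, hence was already deleted; so $\{x_{j,2},\ldots,x_{j,k}\}$ contributes exactly the disjoint union $\cup\{x_{j,2},\ldots,x_{j,k}\}$. I expect the only mild subtlety is being careful with the boundary/degenerate cases ($k=2$, or $x_j$ isolated in $G$, or $N_G[x_j]=V(G)$), and with the convention that $G_0$ is empty, but none of these causes real difficulty — the heart of all three parts is just tracking the inequality $p+q\le k+1$ through the deletions.
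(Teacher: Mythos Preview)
Your proof is correct and follows essentially the same approach as the paper --- explicit bijections shifting the second index by one and checking how the inequality $p+q\le k+1$ transforms. One slip in part~(3): your formula $N_{G_k}[x_{j,1}]=\{x_{l,q}\mid x_l\in N_G[x_j],\ 1\le q\le k\}$ wrongly includes $x_{j,2},\ldots,x_{j,k}$ (since $\{x_j,x_j\}\notin E(G)$, only $x_{j,1}$ itself belongs to the closed neighborhood); the correct set is $\{x_{j,1}\}\cup\{x_{l,q}\mid x_l\in N_G(x_j),\ 1\le q\le k\}$, which is in fact what you use one line later when you (correctly) list $x_{j,2},\ldots,x_{j,k}$ among the surviving vertices.
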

\begin{proof}
(1) This follows directly from the Construction \ref{construction}.
\vskip 0.3mm \noindent
(2) Let $G'=G_k \setminus \{x_{1,1},\ldots,x_{n,1}\}$, 
 $G''=G_{k-2} \cup \{x_{1,k},\ldots,x_{n,k}\}$ and  $\Phi: V(G') \longrightarrow V(G'')$ be the map 
 defined by $$
  \Phi(x_{i,p}) =
  \begin{cases}
                                   x_{i,p-1} & \text{if $p \neq k$} \\
                                   x_{i,k} & \text{if $p=k$,} 
  \end{cases}
$$
for all $1 \leq i \leq n$, $2 \leq p \leq k$. 
 Clearly $|V(G')|=nk-n$  and $|V(G'')|=(k-2)n+n=nk-n$. Suppose $\{x_{i,p},x_{j,q}\} \in E(G')$
 for some $1 \leq i,j \leq n$, $2 \leq p,q \leq k$. Then $p+q \leq k+1$ if and only if $p-1+q-1 \leq k-1$.
 Hence $G'$ is isomorphic to $G''$.
 \vskip 0.3mm \noindent
 (3) Let
 $\Phi: V(G') \longrightarrow V(G'')$ be the map defined by $\Phi(x_{i,p})=x_{i,p}$,  where $G'=G_k \setminus N_{G_k}[x_{j,1}]$ and  
 $G''=(G \setminus N_G[x_j])_k \cup \{x_{j,2},\ldots,x_{j,k}\}$.  
 If $|N_G(x_i)|=t$, then  $|N_{G_k}(x_{i,1})|=kt$. 
 Therefore $|V(G')|=kn-(kt+1)=kn-kt-1$,
 $|V(G'')|=k(n-(t+1))+k-1=kn-kt-1$.
 Suppose $\{x_{a,b},x_{c,d}\} \in E(G')$. 
 Since $x_{a,b},x_{c,d} \notin N_{G_k}(x_{i,1})$, we get $x_a,x_b \notin N_G(x_i)$. Therefore 
 $\{x_{a,b},x_{c,d}\} \in E(G'')$. 
 If $\{x_{a,b},x_{c,d}\} \in E(G'')$, then one can similarly show that $\{x_{a,b},x_{c,d}\} \in E(G')$. 
 Hence $G'$ is isomorphic to $G''$. 
\end{proof}
 
 The following lemma is probably well-known. We include the proof for completeness.
 \begin{lemma}\label{obs-verdec}
 Let $G$ be a graph and $\{x_1,\ldots,x_m\} \subseteq V(G)$. Set $\Psi_0=G$,
 $$
  \Psi_i=\Psi_{i-1} \setminus x_i \text{ and } \Omega_i=\Psi_{i-1} \setminus N_{\Psi_{i-1}}[x_i] \text{ for all }
  1 \leq i \leq m.  
 $$
If 
\begin{enumerate}
 \item $x_i$ is a shedding vertex of $\Psi_{i-1}$, for all $1 \leq i \leq m$,
 \item $\Omega_i$ is a vertex decomposable graph, for all $1 \leq i \leq m$, and
 \item $\Psi_m$ is a vertex decomposable graph,
\end{enumerate}
then $G$ is a vertex decomposable graph.
\end{lemma}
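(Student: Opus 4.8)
The plan is to prove, by downward induction on $i$, that each $\Psi_i$ is a vertex decomposable graph for $i = m, m-1, \dots, 1, 0$; the case $i = 0$ then gives the desired conclusion, since $\Psi_0 = G$. The base case $i = m$ is precisely hypothesis (3).

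For the inductive step, I would fix $1 \le i \le m$, assume $\Psi_i$ is vertex decomposable, and apply the recursive criterion of Definition \ref{def-vertdecom} to the graph $\Psi_{i-1}$ at the vertex $x_i$. By hypothesis (1), $x_i$ is a shedding vertex of $\Psi_{i-1}$, so Condition (2) of Definition \ref{def-vertdecom} holds for $x_i$ in $\Psi_{i-1}$ (unless $\deg_{\Psi_{i-1}}(x_i) = 0$, a degenerate case handled below). By the definitions in the statement, $\Psi_{i-1} \setminus x_i = \Psi_i$, which is vertex decomposable by the induction hypothesis, and $\Psi_{i-1} \setminus N_{\Psi_{i-1}}[x_i] = \Omega_i$, which is vertex decomposable by hypothesis (2). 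Hence Definition \ref{def-vertdecom} yields that $\Psi_{i-1}$ is vertex decomposable, completing the induction and, at $i = 0$, the lemma.

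It remains to treat the degenerate possibility that $\deg_{\Psi_{i-1}}(x_i) = 0$ for some $i$; this is the only place that needs any care, and it is not a genuine obstacle. In that situation $N_{\Psi_{i-1}}[x_i] = \{x_i\}$, so $\Psi_{i-1}$ is the disjoint union of the isolated vertex $\{x_i\}$ — a totally disconnected, hence vertex decomposable, graph — with $\Psi_i$. By Corollary \ref{cor-shedding}(2), $\Psi_{i-1}$ is vertex decomposable exactly when $\Psi_i$ is, which the induction hypothesis supplies. With both cases settled, the downward induction runs all the way to $i = 0$, and $G = \Psi_0$ is vertex decomposable.
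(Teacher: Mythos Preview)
Your proof is correct and follows essentially the same downward induction as the paper's proof: starting from $\Psi_m$ and $\Omega_m$ being vertex decomposable, one concludes $\Psi_{m-1}$ is, then $\Psi_{m-2}$, and so on down to $\Psi_0 = G$. You add the extra care of handling the degenerate case $\deg_{\Psi_{i-1}}(x_i)=0$ explicitly via Corollary~\ref{cor-shedding}(2), which the paper's terse argument glosses over, but the overall approach is identical.
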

\begin{proof}
Since $\Psi_m$ and $\Omega_m$ are vertex decomposable, 
$\Psi_{m-1}$ is vertex decomposable. But then because $\Psi_{m-1}$ and $\Omega_{m-1}$ are
vertex decomposable, so is $\Psi_{m-2}$, and so on. In particular, $\Psi_1$ and 
$\Omega_1$ are vertex decomposable. Hence $G$ is a vertex decomposable graph.
\end{proof}
\begin{corollary} \label{van-noupure}
 Let $G$ be a vertex decomposable graph and $x$ be a simplicial vertex of $G$. If
 $y \in N_G(x)$, then $G \setminus y$ is a vertex decomposable graph.
\end{corollary}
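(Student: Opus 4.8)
The plan is to deduce Corollary \ref{van-noupure} directly from Lemma \ref{obs-verdec} by taking $m=1$ and choosing $x_1=y$. With this choice we have $\Psi_0=G$, $\Psi_1=\Psi_0\setminus y=G\setminus y$, and $\Omega_1=\Psi_0\setminus N_{\Psi_0}[y]=G\setminus N_G[y]$, so the hypotheses to verify are: (1) $y$ is a shedding vertex of $G$, (2) $G\setminus N_G[y]$ is vertex decomposable, and (3) $G\setminus y$ is vertex decomposable. Condition (1) is immediate from Corollary \ref{cor-shedding}(1), since $y$ is a neighbor of the simplicial vertex $x$. The conclusion of Lemma \ref{obs-verdec} would then read ``$G$ is vertex decomposable,'' which is our hypothesis, not our desired conclusion — so I would instead apply Lemma \ref{obs-verdec} (or rather, its underlying one-step argument) in the form that recovers the vertex decomposability of $G\setminus y$. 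More precisely, the cleanest route is to invoke Definition \ref{def-vertdecom} applied to the vertex $y$ in $G$: since $G$ is vertex decomposable and $y$ is not a shedding vertex by accident but genuinely satisfies Condition (2) of that definition (again by Corollary \ref{cor-shedding}(1)), the recursive definition forces both $G\setminus y$ and $G\setminus N_G[y]$ to be vertex decomposable; in particular $G\setminus y$ is vertex decomposable.

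So the key steps, in order, are: first observe that $y\in N_G(x)$ with $x$ simplicial makes $y$ a shedding vertex of $G$ (Corollary \ref{cor-shedding}(1)); second, note that a vertex decomposable graph that is not totally disconnected — which is the case here, since the edge $\{x,y\}$ is present — admits, by Definition \ref{def-vertdecom}, \emph{some} shedding vertex $v$ for which $G\setminus v$ and $G\setminus N_G[v]$ are both vertex decomposable, but the definition does not a priori let us pick $v=y$. The genuine content is therefore to argue that the recursive structure can be taken along $y$. The standard fact here (this is essentially Woodroofe's observation, and it is exactly what Lemma \ref{obs-verdec} packages) is that for a vertex decomposable graph $G$, if $v$ is \emph{any} shedding vertex of $G$, then $G\setminus v$ and $G\setminus N_G[v]$ are vertex decomposable. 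I would cite this as the crucial input; combined with the shedding property of $y$, it yields $G\setminus y$ vertex decomposable immediately.

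The main obstacle — and it is a small one — is precisely justifying that a shedding vertex can be used as the decomposing vertex in a vertex decomposable graph; this is not literally the phrasing of Definition \ref{def-vertdecom}, which only guarantees the \emph{existence} of one good vertex. The resolution is that every shedding vertex is equally good: if $G$ is vertex decomposable with some decomposing vertex $w$, and $v$ is any other shedding vertex, then one shows by induction on $|V(G)|$ that $G\setminus v$ and $G\setminus N_G[v]$ remain vertex decomposable. Alternatively, and most economically for this paper, I would simply note that Lemma \ref{obs-verdec} with $m=1$, $x_1=y$ has hypotheses (1) satisfied by Corollary \ref{cor-shedding}(1) and, reading the proof of Lemma \ref{obs-verdec} backwards, the hypothesis that $G=\Psi_0$ is vertex decomposable together with (1) forces $\Psi_1=G\setminus y$ and $\Omega_1=G\setminus N_G[y]$ to be vertex decomposable — which is exactly the content of the first sentence of that proof applied at level $i=1$. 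Thus $G\setminus y$ is vertex decomposable, completing the argument.
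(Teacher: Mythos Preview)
Your argument has a genuine gap, and it is precisely the one you yourself flag as ``the main obstacle'': you need that if $G$ is vertex decomposable and $v$ is a shedding vertex, then $G\setminus v$ is vertex decomposable. This is not the content of Lemma~\ref{obs-verdec}, which runs in the opposite direction (from vertex decomposability of the $\Omega_i$ and $\Psi_m$ to that of $\Psi_0$), and ``reading the proof backwards'' does not work: the proof of Lemma~\ref{obs-verdec} simply applies Definition~\ref{def-vertdecom} at each step, and that definition only asserts the \emph{existence} of a good vertex, not that an arbitrary shedding vertex is good. Your proposed induction on $|V(G)|$ is not carried out, and it is not straightforward: if $w$ is the witnessing vertex for $G$, there is no reason $v$ remains a shedding vertex of $G\setminus w$, nor that $w$ is a shedding vertex of $G\setminus v$. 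In effect, the ``standard fact'' you invoke is (a strengthening of) the corollary you are trying to prove, so the argument is circular.

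The paper avoids this entirely. Rather than trying to decompose $G$ at $y$, it works directly with $G\setminus y$ and applies Lemma~\ref{obs-verdec} in the forward direction, using the \emph{remaining} neighbors $y_2,\ldots,y_t$ of the simplicial vertex $x$ as the shedding sequence. Because $x$ stays simplicial in each $\Psi_{i-1}$, each $y_i$ is a shedding vertex there; because $N_G(x)$ is a clique, each $\Omega_i$ equals $G\setminus N_G[y_i]$, and the terminal graph $\Psi_t$ is $G\setminus N_G[x]$ together with the isolated vertex $x$. For these pieces the paper invokes \cite[Theorem~2.5]{BFH15}, which gives vertex decomposability of $G\setminus N_G[v]$ (the link side), together with Corollary~\ref{cor-shedding}(2). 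Note that only the link direction is needed; the deletion direction---exactly what your argument requires---is never used.
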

\begin{proof}
 Let $N_G(x)=\{y_1,\ldots,y_t\}$, where $t \geq 1$. It is enough to prove that $G \setminus y_1$ is vertex decomposable.
Set $\Psi_1=G\setminus y_1$,
 $
  \Psi_i=\Psi_{i-1}\setminus y_i \text{ and } \Omega_i=\Psi_{i-1} \setminus N_{\Psi_{i-1}}[y_i] \text{ for 
  all $2 \leq i \leq t$.}
$
By Corollary \ref{cor-shedding}(1), $y_i$ is a shedding vertex of $\Psi_{i-1}$ for all $2 \leq i \leq t$.
Since $x$ is a simplicial vertex of $G$, $\Omega_i=G \setminus N_G[y_i]$ for all $2 \leq i \leq t$ and
$G \setminus N_G[x]=\Psi_t \cup \{x\}$.
Therefore by \cite[Theorem 2.5]{BFH15} and Corollary \ref{cor-shedding}(2), 
 $\Psi_t$ and $\Omega_i$  are vertex decomposable  graphs for all $2 \leq i \leq t$.
Hence, by Lemma \ref{obs-verdec}, $G \setminus y_1$ is a vertex decomposable graph.
\end{proof}

The following result is crucial in obtaining our main results.

\begin{theorem}\label{complete-vedeco}
 If $G$ is a complete graph, then $G_k$ is a vertex 
 decomposable graph for all $k \geq1$.
\end{theorem}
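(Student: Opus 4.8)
The plan is to argue by strong induction on $k$, the key tool in each inductive step being an application of Lemma \ref{obs-verdec} in which the vertices $x_{1,1}, x_{2,1}, \ldots, x_{n,1}$ of $G_k$ are peeled off one at a time in this order. Write $G = K_n$. If $n = 1$ then $G$ has no edges, so $G_k$ has no edges either and is vertex decomposable; thus we may assume $n \geq 2$. The base case $k = 1$ is immediate: $G_1 = G = K_n$, and a complete graph is vertex decomposable, since for $n \geq 2$ every vertex is a neighbor of a (simplicial) vertex, hence a shedding vertex by Corollary \ref{cor-shedding}(1), while $K_n \setminus x = K_{n-1}$ and $K_n \setminus N_{K_n}[x]$ is the empty graph. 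For the inductive step fix $k \geq 2$ and assume $H_{k'}$ is vertex decomposable for every complete graph $H$ and every $k' < k$. In the notation of Lemma \ref{obs-verdec}, put $\Psi_0 = G_k$, $\Psi_i = \Psi_{i-1} \setminus x_{i,1} = G_k \setminus \{x_{1,1}, \ldots, x_{i,1}\}$, and $\Omega_i = \Psi_{i-1} \setminus N_{\Psi_{i-1}}[x_{i,1}]$; it remains to verify the three hypotheses of that lemma.

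I would dispatch the ``end'' hypothesis and the $\Omega_i$'s first. For $\Psi_n = G_k \setminus \{x_{1,1}, \ldots, x_{n,1}\}$, Lemma \ref{tech_lemma-cons}(2) gives $\Psi_n \simeq G_{k-2} \cup \{x_{1,k}, \ldots, x_{n,k}\}$; here $G_{k-2} = (K_n)_{k-2}$, which is the empty graph when $k = 2$ and is vertex decomposable by the induction hypothesis when $k \geq 3$, while adjoining the isolated vertices $x_{1,k}, \ldots, x_{n,k}$ preserves vertex decomposability by Corollary \ref{cor-shedding}(2), so $\Psi_n$ is vertex decomposable. For $\Omega_i$, the key observation is that each vertex $x_{1,1}, \ldots, x_{i-1,1}$ deleted from $G_k$ to form $\Psi_{i-1}$ already lies in $N_{G_k}[x_{i,1}]$ (as $G$ is complete and $k \geq 1$), whence $\Omega_i = G_k \setminus N_{G_k}[x_{i,1}]$; since $G$ is complete, $G \setminus N_G[x_i]$ has no vertices, and Lemma \ref{tech_lemma-cons}(3) identifies $\Omega_i$ with the discrete graph on $\{x_{i,2}, \ldots, x_{i,k}\}$, which is vertex decomposable.

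The remaining hypothesis — that each $x_{i,1}$ is a shedding vertex of $\Psi_{i-1}$ — is the delicate point, and I would handle it by exhibiting a simplicial vertex of $\Psi_{i-1}$ adjacent to $x_{i,1}$ and invoking Corollary \ref{cor-shedding}(1). By Construction \ref{construction}, $N_{G_k}(x_{n,k}) = \{x_{j,1} : j \neq n\}$; hence for $i < n$ one gets $N_{\Psi_{i-1}}(x_{n,k}) = \{x_{i,1}, x_{i+1,1}, \ldots, x_{n-1,1}\}$, and since $G$ is complete and $k \geq 1$ these form a clique, so $x_{n,k}$ is a simplicial vertex of $\Psi_{i-1}$ with $x_{i,1}$ among its neighbors. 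For $i = n$ the same computation (now using $k \geq 2$, so that $x_{n-1,k}$ still survives in $\Psi_{n-1}$) gives $N_{\Psi_{n-1}}(x_{n-1,k}) = \{x_{n,1}\}$, so $x_{n-1,k}$ has degree one, is therefore simplicial, and has $x_{n,1}$ as a neighbor. In either case Corollary \ref{cor-shedding}(1) makes $x_{i,1}$ a shedding vertex of $\Psi_{i-1}$, and Lemma \ref{obs-verdec} then yields that $G_k$ is vertex decomposable, closing the induction. The main obstacle is precisely this last step: the partially peeled graphs $\Psi_{i-1}$ are not themselves of the form $H_k$, so their vertex sets and neighborhoods have to be tracked by hand, and the endpoint $i = n$ — where $x_{n,k}$ no longer has $x_{i,1}$ as a neighbor — must be treated as a separate case.
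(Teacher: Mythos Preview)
Your proof is correct and follows essentially the same approach as the paper's: induction on $k$, peeling off $x_{1,1},\ldots,x_{n,1}$ via Lemma~\ref{obs-verdec}, identifying $\Psi_n$ with $G_{k-2}$ plus isolated vertices via Lemma~\ref{tech_lemma-cons}(2), and recognizing each $\Omega_i$ as totally disconnected via Lemma~\ref{tech_lemma-cons}(3). The only cosmetic difference is that the paper packages the shedding-vertex verification by splitting on $n=2$ versus $n\geq 3$ (invoking Lemma~\ref{technical-le} for the latter), whereas you give the argument directly with the uniform split $i<n$ versus $i=n$; both come down to exhibiting a suitable $x_{j,k}$ as a simplicial neighbor of $x_{i,1}$.
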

\begin{proof}
  Let $V(G)=\{x_1,\ldots,x_n\}$. We prove the assertion by induction on $k$. If $k=1$, then by \cite[Corollary 7(2)]{Wood2}, 
 $G_1$ is vertex decomposable. Assume that $k \geq 2$. Set $\Psi_0=G_k$,
 \[
  \Psi_i=\Psi_{i-1} \setminus x_{i,1}\text{ and } \Omega_i=\Psi_{i-1}\setminus N_{\Psi_{i-1}}[x_{i,1}] \text{ for all
  } 1 \leq i \leq n.
 \]
By Lemma \ref{tech_lemma-cons}(2), (3), we have
$\Psi_n \simeq  G_{k-2} \cup \{x_{1,k},\ldots,x_{n,k}\}$ and 
$\Omega_i$ is totally disconnected for all $1 \leq i \leq n$. 
By induction on $k$ and Corollary \ref{cor-shedding}(2), 
$\Psi_n$ and $\Omega_i$ are vertex decomposable graphs for all $1 \leq i \leq n$. 
If $n=2$, then $N_{\Psi_0}(x_{2,k})=\{x_{1,1}\}$ and $N_{\Psi_1}(x_{1,k})=\{x_{2,1}\}$.
Therefore, 
by Corollary \ref{cor-shedding}(1), $x_{l,1}$ is a shedding vertex of $\Psi_{l-1}$ for all $1 \leq l \leq 2$.
If $n \geq 3$, then by Lemma \ref{technical-le}, 
$x_{i,1}$ is a shedding vertex of $\Psi_{i-1}$  for all $1 \leq i \leq n$.
Therefore, by Lemma \ref{obs-verdec}, $G_k$ is a vertex decomposable
graph.
\end{proof}

Let $G$ be a graph. If $Q=\{u_1,\ldots,u_r\} \subseteq V(G)$, then set 
$[Q]=\{u_{1,1},\ldots,u_{r,1}\} \subseteq V(G_k)$.
We are now ready to state our main result of this section.
\begin{theorem}\label{star-verdec}
 If $G$ is a star complete graph, then $G_k$ is a vertex decomposable graph for all $k \geq 1$.
\end{theorem}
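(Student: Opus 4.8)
The plan is to write $G=\kk(x_1)$ as a union of complete graphs $K^{(1)},\dots ,K^{(s)}$ meeting in the common central vertex $x_1$, with $K^{(l)}$ on vertex set $\{x_1,y^{(l)}_2,\dots ,y^{(l)}_{m_l}\}$, and to argue by induction on $k$, splitting according to whether $\kk(x_1)$ is pure or non-pure. If $m_l=1$ for all $l$ then $G=\{x_1\}$ and every $G_k$ is totally disconnected, hence vertex decomposable; so assume $m_l\ge 2$ for some $l$. The cases $k\le 1$ are clear: $G_0$ is empty, and $G_1\simeq G$ is vertex decomposable because $x_1$ is a shedding vertex (since $G\setminus N_G[x_1]=\emptyset$ while $G\setminus x_1\neq\emptyset$), $G\setminus N_G[x_1]=\emptyset$, and $G\setminus x_1=\coprod_l (K^{(l)}\setminus x_1)$ is a disjoint union of complete graphs, vertex decomposable by \cite{Wood2} and Corollary \ref{cor-shedding}(2). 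For $k\ge 2$ the engine in both cases is Lemma \ref{obs-verdec}: I peel off a carefully ordered list of vertices of $G_k$, checking at each stage that the removed vertex is a shedding vertex (via Corollary \ref{cor-shedding}(1), by exhibiting an adjacent simplicial vertex), that the associated graph $\Omega_i$ is vertex decomposable, and that the graph left after all removals is vertex decomposable.

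Suppose first that $\kk(x_1)$ is \emph{pure}, so $m_l\ge 3$ for every $l$. Then I remove the level-$1$ vertices of $G_k$ in the order $x_{1,1}$, followed clique by clique by $y^{(l)}_{2,1},\dots ,y^{(l)}_{m_l,1}$. The shedding condition is verified as in the proof of Lemma \ref{technical-le}(1): since $y^{(l)}_{m_l}$ is simplicial in $G$, Lemma \ref{simplicial-vertex} makes $y^{(l)}_{m_l,k}$ simplicial in $G_k$, and after the removals made so far its surviving neighbours are exactly the not-yet-removed level-$1$ vertices of $K^{(l)}$, still a clique; this handles $x_{1,1}$ and $y^{(l)}_{2,1},\dots ,y^{(l)}_{m_l-1,1}$, while for $y^{(l)}_{m_l,1}$ the vertex $y^{(l)}_{2,k}$ has become a degree-$1$, hence simplicial, neighbour of it. Removing $N_{G_k}[x_{1,1}]$ leaves only isolated vertices because $N_G[x_1]=V(G)$; and removing $N_{G_k}[y^{(l)}_{j,1}]$ deletes all copies of the vertices of $K^{(l)}$, so by Lemma \ref{tech_lemma-cons}(3) the corresponding $\Omega_i$ is, up to isolated vertices, a disjoint union of the graphs $(K^{(l')}\setminus x_1)_k$ with $l'\neq l$, some of which (those whose level-$1$ vertices were removed earlier) are replaced, via Lemma \ref{tech_lemma-cons}(2), by $(K^{(l')}\setminus x_1)_{k-2}$ together with isolated vertices. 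Since each $K^{(l')}\setminus x_1$ is complete, Theorem \ref{complete-vedeco} and Corollary \ref{cor-shedding}(2) show every $\Omega_i$ is vertex decomposable. Finally, by Lemma \ref{tech_lemma-cons}(2) the graph obtained after removing all level-$1$ vertices is $G_{k-2}$ together with isolated vertices, vertex decomposable by the induction hypothesis and Corollary \ref{cor-shedding}(2). Lemma \ref{obs-verdec} now gives that $G_k$ is vertex decomposable.

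Suppose now that $\kk(x_1)$ is \emph{non-pure}, so some $K^{(l)}$ is an edge $\{x_1,\ell\}$, a whisker. This time I remove the $k$ copies $x_{1,1},x_{1,2},\dots ,x_{1,k}$ of the central vertex; here no induction on $k$ is needed. After all these removals, Lemma \ref{tech_lemma-cons}(1) identifies the remaining graph with $(G\setminus x_1)_k=\coprod_l (K^{(l)}\setminus x_1)_k$, a disjoint union of graphs of the form $H_k$ with $H$ complete, which is vertex decomposable by Theorem \ref{complete-vedeco} and Corollary \ref{cor-shedding}(2). For the shedding condition, mimic the proof of Lemma \ref{technical-le}(2): in $G_k\setminus\{x_{1,1},\dots ,x_{1,i-1}\}$ the vertex $\ell_{k+1-i}$ has unique neighbour $x_{1,i}$, so $\ell_{k+1-i}$ is simplicial there and Corollary \ref{cor-shedding}(1) makes $x_{1,i}$ a shedding vertex. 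A direct check (using Construction \ref{construction}) shows that each $\Omega_i$ is, up to isolated vertices, a disjoint union of graphs $(K^{(l')}\setminus x_1)_{*}$ with $K^{(l')}\setminus x_1$ complete, hence vertex decomposable by Theorem \ref{complete-vedeco} and Corollary \ref{cor-shedding}(2); and Lemma \ref{obs-verdec} again yields that $G_k$ is vertex decomposable.

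The delicate point is choosing the peeling order so that Lemma \ref{tech_lemma-cons} always presents each deletion--link $\Omega_i$, and the final graph, as a disjoint union of graphs $H_k$ or $H_{k-2}$ for complete $H$, together with isolated vertices; this is why the level-$1$ vertices are removed clique by clique, and why the split on purity is forced, since producing the simplicial helper vertex $y^{(l)}_{m_l,k}$ requires $m_l\ge 3$, whereas for a whisker one instead obtains a degree-$1$ copy of the pendant vertex, forcing one to peel the copies of $x_1$ in the non-pure case. I expect the bulk of the work in the write-up to be this bookkeeping of $\Omega_i$ and the shedding checks; everything else reduces to the lemmas already in place.
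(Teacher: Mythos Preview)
Your proposal is correct and follows essentially the same strategy as the paper: split into the non-pure and pure cases, in the non-pure case peel off $x_{1,1},\dots,x_{1,k}$ (no induction on $k$), in the pure case peel off all level-$1$ vertices clique by clique and induct on $k$, in both cases invoking Lemma~\ref{obs-verdec} with the shedding checks coming from Lemma~\ref{technical-le} and the final graph/links reducing to Theorem~\ref{complete-vedeco}. The only cosmetic difference is that in the pure case you identify the earlier-processed components of $\Omega_i$ as $(K^{(l')}\setminus x_1)_{k-2}$ via Lemma~\ref{tech_lemma-cons}(2), whereas the paper leaves them as $(K^{(l')}\setminus x_1)_k$ with level-$1$ vertices deleted and appeals to Corollary~\ref{van-noupure}; and in the non-pure case your ``direct check'' for $\Omega_i$ is exactly the paper's explicit isomorphism $\Omega_i'\simeq (G\setminus x_1)_{2(i-1)-k}\cup\{\text{isolated vertices}\}$ for $i\ge\lceil (k+3)/2\rceil$ (totally disconnected otherwise), which you should record when writing this up.
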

\begin{proof}
Let $G=\kk(x_1)$ and $F_1,\ldots,F_t$ be the facets of $\Delta_{G}$. We split the proof into two cases.
\vskip 0.1mm \noindent
 \textsc{Case 1:} Suppose $\kk(x_1)$ is a non-pure star complete graph. Set 
 $\Psi_0=G_k$,
 $
  \Psi_i=\Psi_{i-1} \setminus x_{1,i}\text{ and } \Omega_i=\Psi_{i-1}\setminus N_{\Psi_{i-1}}[x_{1,i}] \text{ for all
  } 1 \leq i \leq k.
 $
It follows from  Lemmas \ref{technical-le}(2), \ref{tech_lemma-cons} that
 $x_{1,i}$ is a shedding vertex of $\Psi_{i-1}$ for all $1 \leq i \leq k$ and
$\Psi_k=(G \setminus \{x_1\})_k= (H_1)_k \coprod \cdots \coprod (H_t)_k$, where $H_i$ is a complete graph
with vertex set $F_i \setminus \{x_1\}$ for all $1 \leq i \leq t$. Therefore,
by Theorem \ref{complete-vedeco}, $\Psi_k$ is a vertex decomposable graph.
By Lemma \ref{obs-verdec}, it remains to prove that $\Omega_i$ is a vertex decomposable
graph for all $1 \leq i \leq k$. Note that $\Omega_1=G_k \setminus N_{G_k}[x_{1,1}]$ is totally disconnected and
hence $\Omega_1$ is vertex decomposable. 
Let $V(G)=\{x_1,\ldots,x_m\}$. 
For any $1 \leq i \leq k$, we have
$N_{G_k}(x_{1,i})=\{x_{2,1},\ldots,x_{2,k-i+1},\ldots,x_{m,1},\ldots,x_{m, k-i+1}\}$. 
Therefore for all $1 < i \leq k$,
\begin{align*}
 V(\Omega_i)&=
 \{x_{2,k-i+2},\ldots, x_{2,k},  \ldots, x_{m,k-i+2},\ldots,x_{m,k},x_{1,i+1},\ldots,x_{1,k}\}.
\end{align*}
Note that $\{x_{1,i+1},\ldots,x_{1,k}\}$ are isolated vertices in $\Omega_i$.  
Set $\Omega_i'=\Omega_i \setminus \{x_{1,i+1},\ldots,x_{1,k}\}$ for all $1 \leq i \leq k$.
By Corollary \ref{cor-shedding}(2), it is enough to prove that $\Omega_i'$ is a vertex decomposable graph for all $1 \leq i \leq k$.
If $G \setminus x_1$ is totally disconnected, then so is $\Omega_i'$ for all $1 < i \leq k$. 
Therefore $\Omega_i'$ is  vertex decomposable for all $1 \leq i \leq k$. 

Suppose 
$G \setminus x_1$ is not totally disconnected. If $1 < i<\lceil \frac{k+3}{2}\rceil$, 
then $\Omega_i'$ is totally disconnected.
Therefore $\Omega_i'$ is  vertex decomposable for all $1 < i <\lceil \frac{k+3}{2}\rceil$. 
Assume that $i \geq \lceil \frac{k+3}{2}\rceil$. Now we claim that
$
 \Omega_i' \simeq (G \setminus x_1)_{2(i-1)-k} \bigcup \{ \text{ isolated vertices}\} 
 \text{ for all $i \geq \lceil \frac{k+3}{2}\rceil$}.
$
Let $\Phi: V(\Omega_i') \to V((G \setminus x_1)_{2(i-1)-k}) \cup \{\text{isolated vertices}\}$ be the map 
defined by
\begin{align*}
\Phi_i(x_{p,k-i+1+q})&=x_{p,q} \text{ for all } 2 \leq p \leq m,~1 \leq q \leq 2(i-1)-k; \\
\Phi(x_{p,k-i+1+q})&= \{\text{some isolated vertex}\} \text{ for all } 2 \leq p \leq m,~ 
2(i-1)-k+1 \leq q \leq i-1.
\end{align*}
Let $\{x_{p,k-i+1+q},x_{p',k-i+1+q'} \} \in E(\Omega_i')$ 
$\text{ for any } 2 \leq p,p' \leq m, 1 \leq q,q' \leq 2(i-1)-k$.
Then $2(k-i+1)+q+q' \leq k+1$ if and only if $q+q' \leq 2(i-1)-k+1$. Therefore 
$\Omega_i' \simeq (G \setminus x_1)_{2(i-1)-k} \bigcup \{ \text{ isolated vertices}\}$
for all $i \geq \lceil \frac{k+3}{2}\rceil$. 
Since $G \setminus x_1$ is the disjoint union of complete graphs, by Theorem \ref{complete-vedeco} and 
Corollary \ref{cor-shedding}, 
$\Omega_i'$ is a vertex decomposable graph.

 \vskip 0.3mm \noindent 
 \textsc{Case 2:} Suppose $\kk(x_1)$ is pure star complete.
  We prove by induction on $k$. If $k=1$, then by \cite[Corollary 7(2)]{Wood2}, 
 $G$ is a vertex decomposable graph. Assume that $k \geq 2$.
 Let $F_i=\{x_1,x_{i_1},\ldots,x_{i_{r_i}}\}$ for all $1 \leq i \leq t$.  
 Note that $x_{1,1}$ is a shedding vertex of $G_k$ and 
  $G_k \setminus N_{G_k}[x_{1,1}]$ is totally disconnected and hence 
 it is a vertex decomposable graph.
  It follows from  Lemmas \ref{technical-le}, \ref{tech_lemma-cons} that
 $G_k \setminus \{[F_1],\ldots,[F_t]\}
 =G_{k-2} \cup \{\text{isolated vertices}\}$ and 
 $x_{i_j,1}$ is a shedding vertex of 
 $G_k \setminus \mathcal{S}$ for any $1 \leq i \leq t$, $1 \leq j \leq r_i$, where 
 $\mathcal{S}=\{x_{1,1}, [F_1] \setminus \{x_{1,1}\},\ldots,[F_{i-1}] \setminus 
 \{x_{1,1}\},x_{i_1,1},\ldots,x_{i_{j-1},1}\}$.
  By induction on $k$, $G_k \setminus \{[F_1],\ldots,[F_t]\}$ 
 is vertex decomposable. 
  If $x_{i_j} \in F_i \setminus \{x_{1}\}$, then by Lemma \ref{tech_lemma-cons},
 $
  G_k \setminus \{N_{G_k}[x_{i_j,1}], \mathcal{S}\}= 
  \mathcal{H}\setminus \bigg\{[F_1] \setminus \{x_{1,1}\},\ldots,[F_{i-1}] \setminus 
 \{x_{1,1}\}\bigg\},    
 $
where $\mathcal{H}=(H_1)_k\coprod \cdots \coprod (H_{i-1})_k \coprod
  (H_{i+1})_k \cdots \coprod (H_{t})_k$ and $H_{l}$ is a complete graph with vertex set 
  $F_l \setminus x_1$ for all $1 \leq l \neq i \leq t$. 
By Theorem \ref{complete-vedeco} and Corollary \ref{cor-shedding}(2), $\mathcal{H}$ is a vertex decomposable graph. 
  Since $[F_1] \setminus \{x_{1,1}\},\ldots,[F_{i-1}] \setminus 
 \{x_{1,1}\}$ are neighbors of simplicial vertices of $\mathcal{H}$, by Corollary \ref{van-noupure},
  $G_k \setminus \{N_{G_k}[x_{i_j,1}], \mathcal{S}\}$ is vertex decomposable.
 Hence, by Lemma \ref{obs-verdec}, $G_k$ is a vertex decomposable graph.
 \end{proof}

We can extend Theorem \ref{star-verdec} slightly.
\begin{lemma}\label{del-verdec}
 Let $G=\kk(y_1)$ be a star complete graph and 
 $A_i=G_k \setminus \{y_{1,1},\ldots,y_{1, i}\}$ for all $1 \leq i \leq k$. 
 Then $A_i$ is a vertex decomposable graph for all $1 \leq i \leq k$.
\end{lemma}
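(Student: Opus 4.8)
The plan is to mirror the two-case structure of the proof of Theorem~\ref{star-verdec}.

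If $\kk(y_1)$ is \emph{not} pure there is almost nothing to do. Taking $x_1$ in Case~1 of the proof of Theorem~\ref{star-verdec} to be our $y_1$, the graph $A_i$ is precisely the intermediate graph $\Psi_i$ produced there by deleting $y_{1,1},\dots,y_{1,k}$ one at a time, and that proof already checks that each $y_{1,j}$ is a shedding vertex of $\Psi_{j-1}$, that each $\Omega_j=\Psi_{j-1}\setminus N_{\Psi_{j-1}}[y_{1,j}]$ is vertex decomposable, and that $\Psi_k=(G\setminus y_1)_k$ is vertex decomposable. Feeding these facts into Lemma~\ref{obs-verdec}, applied to the graph $A_i=\Psi_i$ with the (possibly empty) deletion sequence $y_{1,i+1},\dots,y_{1,k}$, shows that $A_i$ is vertex decomposable.

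Now suppose $\kk(y_1)$ is pure, let $F_1,\dots,F_t$ be the facets of $\Delta_{\kk(y_1)}$ (so $y_1\in F_\ell$ and $|F_\ell|\ge 3$ for every $\ell$), and let $H_\ell$ denote the complete graph on $F_\ell$. I would argue by induction on $k$, the base case $k=1$ being clear since then $A_1=G\setminus y_1$ is a disjoint union of complete graphs, and the case $i=k$ being clear in general since $A_k=(G\setminus y_1)_k$ by Lemma~\ref{tech_lemma-cons}(1), which is vertex decomposable by Theorem~\ref{complete-vedeco} and Corollary~\ref{cor-shedding}(2). So assume $k\ge 2$ and $1\le i\le k-1$, put $\Theta_0=A_i$, and delete from $A_i$ the level-one copies $x_{j,1}$ of the non-central vertices $x_j$ one at a time, aiming to apply Lemma~\ref{obs-verdec}. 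Three things need checking at each stage. (i)~The vertex $x_{j,1}$ being deleted is a shedding vertex of the current graph: if $x_j\in F_\ell$, then, since $|F_\ell|\ge 3$, there is some $c\in F_\ell\setminus\{x_j,y_1\}$; the vertex $c_k$ is simplicial in $G_k$ by Lemma~\ref{simplicial-vertex}, hence remains simplicial after deleting other vertices and is never itself removed (it sits at level $k\ge 2$), while $x_{j,1}\in N(c_k)$ because $1+k\le k+1$; so Corollary~\ref{cor-shedding}(1) applies. (ii)~The link $(\,\cdot\,)\setminus N[x_{j,1}]$ is vertex decomposable: by Construction~\ref{construction} and Lemma~\ref{tech_lemma-cons}(3), removing $N_{G_k}[x_{j,1}]$ deletes $x_{j,1}$, every copy of $y_1$, and every copy of $F_\ell\setminus\{y_1\}$, so, up to isolated vertices, the link is $\coprod_{m\ne\ell}(K_{|F_m|-1})_k$ with some level-one vertices removed; this is vertex decomposable by Theorem~\ref{complete-vedeco}, Corollary~\ref{cor-shedding}(2), and repeated use of Corollary~\ref{van-noupure}, since $|F_m|-1\ge 2$ and in $(K_{|F_m|-1})_k$ every level-one copy is a neighbor of a level-$k$ copy of another vertex, which is simplicial. (iii)~The graph reached after all deletions, namely $A_i\setminus\{x_{j,1}\mid x_j\ne y_1\}=G_k\setminus\bigl(\{x_{1,1},\dots,x_{n,1}\}\cup\{y_{1,2},\dots,y_{1,i}\}\bigr)$, is by Lemma~\ref{tech_lemma-cons}(2) and $i\le k-1$ isomorphic to $G_{k-2}\setminus\{y_{1,1},\dots,y_{1,i-1}\}$ together with isolated vertices; this is vertex decomposable by Theorem~\ref{star-verdec} when $i=1$ and by the induction hypothesis at $k-2$ when $i\ge 2$ (note $i\ge 2$ forces $k\ge 3$). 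With (i)--(iii) verified, Lemma~\ref{obs-verdec} yields that $A_i=\Theta_0$ is vertex decomposable, completing the induction.

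The manipulations in (ii) and (iii) are routine but demand some care in tracking which level-one vertices have been removed by each stage. The one real obstacle is that the clean pure-case argument relies on $|F_\ell|\ge 3$ to supply the simplicial vertices $c_k$ (and the level-$k$ copies used with Corollary~\ref{van-noupure}); for a whisker vertex of a non-pure star complete graph no such partner exists, which is precisely why that case must instead be reduced to the explicit vertex-decomposition sequence already constructed in Case~1 of the proof of Theorem~\ref{star-verdec}.
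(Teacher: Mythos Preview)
Your proof is correct and follows essentially the same strategy as the paper's: both split into the non-pure case (reduced to Case~1 of Theorem~\ref{star-verdec}) and the pure case (induction on $k$, shedding the level-one copies of the non-central vertices and invoking Lemma~\ref{obs-verdec}), with the terminal graph identified via the isomorphism of Lemma~\ref{tech_lemma-cons}(2) as $G_{k-2}\setminus\{y_{1,1},\dots,y_{1,i-1}\}$ plus isolated vertices. Your treatment is in fact a bit more explicit than the paper's in two places---you isolate the case $i=k$ and you spell out the simplicial witness $c_k$ in step~(i)---but the architecture is the same.
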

\begin{proof}
Let $V(G)=\{x_1,\ldots,x_n,y_1\}$.   
First we claim that
$G'=A_i \setminus \{x_{1,1},\ldots,x_{n,1}\} \simeq G''= G_{k-2} \setminus \{y_{1,1},\ldots,y_{1,i-1}\}
  \bigcup \{x_{1,k},\ldots,x_{n,k},y_{1,k}\}$ for  $1 \leq i \leq k$.
   Let $\Phi: V(G') \longrightarrow V(G'')$ be the map  
 defined by 
 \begin{align*}
   \Phi(y_{1,p}) =
  \begin{cases}
                                   y_{1,p-1} & \text{if $p \neq k$} \\
                                   y_{1,k} & \text{if $p=k$} 
  \end{cases}
  \text{ and } \Phi(x_{i,p}) =
  \begin{cases}
                                   x_{i,p-1} & \text{if $p \neq k$} \\
                                   x_{i,k} & \text{if $p=k$,} 
  \end{cases}
 \end{align*} 
 Clearly $|V(G')|=k(n+1)-n-i$ and $|V(G'')|=(k-2)(n+1)+n+1-i+1=k(n+1)-n-i$.
 Proceeding as in the proof of Lemma \ref{tech_lemma-cons}, we can conclude that
 $G' \simeq G''$. Hence the claim. 
 
If $G$ is a non-pure star complete graph, then by \textsc{Case 1} of Theorem \ref{star-verdec},
$A_i$ is a vertex decomposable graph for all $1 \leq i \leq k$. Suppose $G$ is  pure star complete. 
We prove the assertion by induction on $k$. If $k=1$, then $A_1=G_1 \setminus \{y_{1,1}\}$ is a chordal
graph and hence, by \cite[Corollary 7(2)]{Wood2}, $A_1$ is a vertex decomposable graph. Suppose 
$k>1$. Assume by induction that for any pure star complete graph $G$ and for any $1 \leq i \leq k-1$,
$A_i$ is a vertex decomposable graph. 
If $i=1$, then by claim $A_i \setminus \{x_{1,1},\ldots,x_{n,1}\} \simeq 
G_{k-2}  \cup \{\text{isolated vertices}\}$. By Theorem \ref{star-verdec} and Corollary \ref{cor-shedding}(2),
$A_i \setminus \{x_{1,1},\ldots,x_{n,1}\}$ is a vertex decomposable graph.
If $i>1$, then by claim $A_i \setminus \{x_{1,1},\ldots,x_{n,1}\} \simeq G_{k-2} \setminus \{y_{1,1},\ldots,y_{1,i-1}\} \cup 
\{\text{ isolated vertices}\}$. By induction on $k$, $A_i \setminus \{x_{1,1},\ldots,x_{n,1}\}$
is a vertex decomposable graph.
It follows from Lemma \ref{technical-le} that $x_{j,1}$ is a shedding vertex of $A_i \setminus \{x_{1,1},\ldots,x_{j-1,1}\}$.
Proceeding as in Theorem \ref{star-verdec}, one can show that 
 $A_i \setminus \{x_{1,1},\ldots,x_{l-1,1}, N_{A_i}[x_{l,1}]\}$ is vertex decomposable for any 
 $1 \leq i \leq n$.  Therefore, 
by Lemma \ref{obs-verdec}, $A_i$ is a vertex decomposable graph.
\end{proof}

\section{Linear quotients}\label{main}
In this section, we prove that symbolic powers of cover ideals
of certain vertex decomposable graphs have linear quotients.
We begin by fixing the notation which will be used for the rest of the section.
\begin{setup}\label{setup}
 Let $H$ be a graph and $\{x_{i_1},\ldots,x_{i_q}\} \subseteq V(H)$. The graph 
 $$G=H(\kk(x_{i_1}), \ldots, \kk(x_{i_q}))$$ is 
obtained from $H$ by attaching star complete $\kk(x_{i_j})$ to $H$ at the $x_{i_j}$ for all
$1 \leq j \leq q$.
\end{setup}
We are now ready to prove our first main result.

\begin{theorem}\label{main-result2}
Let $G$ be a graph as in Set-up \ref{setup}. Suppose $\kk(x_{i_j})$ is a pure star complete graph for all 
$1 \leq j \leq q$. If $|V(H)|-1\leq q$, then $J(G)^{(k)}$ has linear
 quotients for all $k \geq 1$.
\end{theorem}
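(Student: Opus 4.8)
The strategy is to reduce the statement about symbolic powers to a statement about ordinary cover ideals of the polarized graph $G_k$. By Lemma~\ref{fak-result}, $\widetilde{J(G)^{(k)}}$ is the cover ideal of $G_k$, and by Corollary~\ref{pol_reg}(2), $J(G)^{(k)}$ has linear quotients if and only if its polarization does. So it suffices to prove that $J(G_k)$ has linear quotients for every $k \ge 1$. Now I invoke the classical fact (Eagon--Reiner together with the translation of sequentially Cohen--Macaulay into componentwise linearity, and the stronger statement for vertex decomposable complexes): if $G_k$ is vertex decomposable, then $J(G_k)$ has linear quotients. Thus the entire problem collapses to showing:
\begin{equation*}
\text{$G_k$ is a vertex decomposable graph for all $k \ge 1$.}
\end{equation*}

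To prove $G_k$ is vertex decomposable I will use the peeling machinery of Lemma~\ref{obs-verdec}: I exhibit an ordered sequence of shedding vertices whose successive deletions $\Psi_i$ end in a vertex decomposable graph, with each link-deletion $\Omega_i$ also vertex decomposable. The natural vertices to shed are the first-level copies $x_{i,1}$ for $x_i \in V(H)$, processed in some order $x_{1,1}, \ldots, x_{m,1}$ where $V(H) = \{x_1,\ldots,x_m\}$ (here $m = |V(H)|$). By Lemma~\ref{tech_lemma-cons}(3), each $\Omega_j = G_k \setminus N_{G_k}[x_{j,1}] \simeq (G \setminus N_{G}[x_j])_k \cup \{\text{isolated vertices}\}$; since $x_{i_1},\ldots,x_{i_q}$ carry pure star completes and $|V(H)| - 1 \le q$, the graph $G \setminus N_G[x_j]$ is (a disjoint union of complete graphs together with) star complete pieces, so each $\Omega_j$ is vertex decomposable by Theorem~\ref{star-verdec}, Theorem~\ref{complete-vedeco}, Lemma~\ref{del-verdec}, and Corollary~\ref{cor-shedding}(2). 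For the shedding property of $x_{j,1}$ at each stage, I use Lemma~\ref{simplicial-vertex} and Lemma~\ref{technical-le}: because every vertex of $H$ (with at most one exception, accommodated by the hypothesis $|V(H)|-1 \le q$) is adjacent to a whiskered star-complete attachment, it is the neighbor of a simplicial vertex of $G$, hence $x_{j,1}$ is a shedding vertex of the current $\Psi_{j-1}$ by Corollary~\ref{cor-shedding}(1). Finally, after shedding all the $x_{j,1}$, Lemma~\ref{tech_lemma-cons}(2) gives $\Psi_m \simeq G_{k-2} \cup \{\text{isolated vertices}\}$, which is vertex decomposable by induction on $k$ (the base cases $k=1,2$ being handled by chordality / Woodroofe's result and Theorem~\ref{star-verdec}); Corollary~\ref{cor-shedding}(2) absorbs the isolated vertices.

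The main obstacle I anticipate is the bookkeeping needed to verify that each $\Omega_j$ and each intermediate graph $G \setminus N_G[x_j]$ genuinely falls under one of the already-established vertex decomposability results, and in particular to handle the single ``exceptional'' vertex of $H$ not carrying a whisker: one must check that deleting its closed neighborhood, or reaching it late in the shedding order, still leaves a graph that is a disjoint union of (polarizations of) complete and star-complete graphs plus isolated vertices. This is where the hypothesis $|V(H)| - 1 \le q$ is used essentially — it guarantees that at most one vertex of $H$ is ``bare,'' so that after processing the $q$ whiskered vertices the residual graph on the bare vertex is a single clique-type piece. Once this case analysis is set up, the rest is an assembly of Lemma~\ref{obs-verdec} with the inputs catalogued above; then transport back through polarization (Corollary~\ref{pol_reg}) and the Eagon--Reiner/Herzog--Hibi correspondence finishes the proof.
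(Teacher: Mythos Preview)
Your overall strategy---reduce to vertex decomposability of $G_k$ via Lemma~\ref{fak-result} and Corollary~\ref{pol_reg}, then peel off level-$1$ vertices using Lemma~\ref{obs-verdec}---matches the paper's, but the execution has two genuine gaps.

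\textbf{First gap: the terminal graph $\Psi_m$ is not $G_{k-2}$.} You shed only the level-$1$ copies $x_{i,1}$ for $x_i\in V(H)$, then invoke Lemma~\ref{tech_lemma-cons}(2) to conclude $\Psi_m\simeq G_{k-2}\cup\{\text{isolated vertices}\}$. But Lemma~\ref{tech_lemma-cons}(2) requires removing the level-$1$ copies of \emph{every} vertex of $G$, and $V(G)$ strictly contains $V(H)$: all the vertices of the attached $\kk(x_{i_j})$'s other than the centers are still present at level $1$ in your $\Psi_m$. The paper sheds \emph{all} of $[V(\kk(x_{i_1}))],\ldots,[V(\kk(x_{i_q}))]$ (and $x_{n,1}$ when $q=n-1$), i.e.\ every level-$1$ vertex of $G_k$, precisely so that Lemma~\ref{tech_lemma-cons}(2) applies. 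Lemma~\ref{technical-le}(1) is what makes those extra vertices shedding vertices; your sequence omits them.

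\textbf{Second gap: the links $\Omega_j$ are not covered by the tools you list, and your induction is on the wrong parameter.} For $j\ge 2$ the link is $\Omega_j=\Psi_{j-1}\setminus N_{\Psi_{j-1}}[x_{j,1}]$, not $G_k\setminus N_{G_k}[x_{j,1}]$; the previously deleted $x_{l,1}$ with $x_l\notin N_G[x_j]$ must still be removed, and this is not addressed (the paper handles it via Corollary~\ref{van-noupure}). More seriously, even $(G\setminus N_G[x_j])_k$ is in general \emph{not} a disjoint union of star-complete or complete pieces: the induced subgraph $H\setminus N_H[x_j]$ may have edges, so $G\setminus N_G[x_j]$ is another graph of the \emph{same} Set-up~\ref{setup} form with a smaller base. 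Theorems~\ref{star-verdec} and~\ref{complete-vedeco} and Lemma~\ref{del-verdec} alone do not cover this; one needs the inductive hypothesis of the theorem itself on a smaller $|V(H)|$. The paper accordingly inducts on $m=k+n$ with $n=|V(H)|$, so that both $k$ can drop (for $\Psi$) and $n$ can drop (for the $\Omega$'s). Your induction on $k$ alone cannot close the $\Omega_j$ step.
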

\begin{proof}
 Let $V(H)=\{x_1,\ldots,x_n\}$.
 First we claim that $G_k$ is a vertex decomposable graph for all $k \geq 1$. 
  We prove this by induction on $m:=k+n$. 
 If $k \geq 1$ and $n=1$, then by Theorem \ref{star-verdec}, $G_k$ is  vertex decomposable.
 If $k=1$ and $n \geq 2$, then by \cite[Corollary 3.6(i)]{Haj_yessemi}, $G_1$ is vertex decomposable.
 
 Assume that $n>1$, $k > 2$.  
 If either $q=n$ or $q=n-1$, then by Lemma \ref{tech_lemma-cons}, either
 \begin{align*}
   G_k \setminus \{[V(\kk(x_1))],\ldots, [V(\kk(x_n))]\} & \simeq G_{k-2} \cup \{\text{isolated vertices}\} \text{ or }\\
   G_k \setminus \{[V(\kk(x_1))],\ldots, [V(\kk(x_q))], x_{n,1}\} & \simeq G_{k-2} \cup \{\text{isolated vertices}\}
   \text{ respectively}. 
 \end{align*}
Therefore by induction on $m$, $G_{k-2}$ is vertex decomposable.  
 Let $F_{t_1},\ldots,F_{t_{r_t}}$ be the facets of $\Delta_{\kk(x_t)}$ for all $1 \leq t \leq q$.  
For $1 \leq t \leq n$, $1 \leq j \leq r_t$, set $\A_t=\bigcup\limits_{1 \leq a \leq t-1}[V(\kk(x_a))]$, 
$\B_{t_j}=\bigcup\limits_{1 \leq b \leq j-1}([F_{t_b}]\setminus x_{t,1}).$  Let $x_{t_{j_l},1} \in F_{t_j}$ for some $1 \leq t \leq q$, $1 \leq j \leq r_t$.
It follows from Lemma \ref{technical-le} that
$x_{t_{j_l},1}$ is a shedding vertex of $G_k \setminus \{\A_t,x_{t,1},\B_{t_j},x_{t_{j_1},1},\ldots,x_{t_{j_{l-1}},1}\}$, 
where 
 $x_{t_{j_1},1},\ldots,x_{t_{j_{l-1}},1} \in [F_{t_j}]$. 
If $q=n$, then by Lemma \ref{technical-le}, $x_{t,1}$ is a shedding vertex of 
$G_k \setminus \A_t$ for all $1 \leq t \leq n$. 
Suppose $q=n-1$.
By Lemma \ref{technical-le}, $x_{t,1}$ is a shedding vertex of 
$G_k \setminus \A_t$ for any $1 \leq t \leq q$. If $\deg_G(x_{n})=0$, then $x_{n,1}$ is a 
shedding vertex of $G_k \setminus \A_n$. Suppose $\deg_G(x_n)>0$. Let $z_i \in N_{G}(x_n)$. Note that
$\{z_{i,k},x_{n,1}\}\in E(G_k)$ and $\deg_{G_k \setminus \A_n}(z_{i,k})=1$. Therefore, by Corollary 
\ref{cor-shedding}(1), $x_{n,1}$ is a shedding vertex of $G_k \setminus \A_n$.

Suppose $q=n$. Set $ N_H(x_t) =\{x_{t_1},\ldots,x_{t_{\alpha_t}}\},$
$V(H)\setminus N_H[x_t]=\{x_{t'_1},\ldots,x_{t'_{\beta_{t'}}}\}$,
 $G'=\left(H\setminus\{x_{t_1},\ldots,x_{t_{\alpha_t}}\}\right)(\kk(x_{t'_1}),\ldots, \kk(x_{t'_{\beta_{t'}}}))$ and
 $$G''=\left(H \setminus x_t\right)(\kk(x_1),\ldots,\kk(x_{t-1}),\kk(x_{t+1}),\ldots,\kk(x_n)).$$
 
By Lemma \ref{tech_lemma-cons}, we have for any $1 \leq t \leq n$, $1 \leq j \leq r_t$, 
\begin{align}
  G_k \setminus \big\{N_{G_k}[x_{t,1}], \A_t\big\} \simeq  \bigg( (G')_k   \coprod_{i=1}^{\alpha_t}
   (\kk(x_{t_i})\setminus x_{t_i})_k \bigg)\setminus \A_t  \cup \{\text{isolated vertices}\}& \label{first:eq1} \\
G_k \setminus \big\{N_{G_k}[x_{t_{j_l},1}], \A_t,x_{t,1},\B_{t_j},x_{t_{j_1},1},\ldots,
x_{t_{j_{l-1}},1}\big\} \simeq    \{\text{isolated vertices}\}&  \coprod \label{first:eq2}\\
(G'')_k \setminus \{\A_t\}\coprod \bigg( \coprod (L_1)_k \cdots \coprod (L_{j-1})_k \coprod 
(L_{j+1})_k \cdots \coprod (L_{r_t})_k\bigg)\setminus \{\B_{t_j}\}& \nonumber,
\end{align}
where 
 $L_i $  is the complete graph
with vertex set  $F_{t_{i}} \setminus \{x_t\}$ for all  $1 \leq i \neq j \leq r_t$. 
By induction on $m$, Theorem \ref{complete-vedeco} and Corollary \ref{cor-shedding},
$(G')_k   \coprod_{i=1}^{\alpha_t}(\kk(x_{t_i})\setminus x_{t_i})_k$ is a vertex decomposable
graph. Since every element in $\A_t$ is either a neighbor of simplicial vertex or
in  $(G')_k   \coprod_{i=1}^{\alpha_t}(\kk(x_{t_i})\setminus x_{t_i})_k$, by Corollary \ref{van-noupure},
we get \eqref{first:eq1} is a vertex decomposable graph. Similarly, we can show that
\eqref{first:eq2} is a vertex decomposable graph.
Suppose $q=n-1$. Now proceeding as in the above paragraph of the proof, one can show that
for any $1 \leq t \leq n$, $1 \leq j \leq r_t$, $G_k \setminus \big\{N_{G_k}[x_{t,1}], \A_t\big\}
\text{ and } G_k \setminus \big\{N_{G_k}[x_{t_{j_l},1}], \A_t,x_{t,1},\B_{t_j},x_{t_{j_1},1},\ldots,
x_{t_{j_{l-1}},1}\big\}$ are vertex decomposable graphs. Therefore, by Lemma \ref{obs-verdec},
$G_k$ is vertex decomposable for all $k \geq 2$. Hence the claim.  

By \cite[Proposition 8.2.5]{Herzog'sBook}, $J(G_k)$ has linear quotients for all $k \geq 1$.
Therefore, by  Lemma \ref{fak-result}, $\widetilde{ J(G)^{(k)}}$ has linear quotients. Hence by 
 Corollary \ref{pol_reg}, $J(G)^{(k)}$ has linear quotients for all $k \geq 1$.
\end{proof}

Using techniques similar to the ones used in the proof of Theorem \ref{main-result2}, Theorem \ref{star-verdec}
and Lemma \ref{del-verdec}, 
we prove our next main result.

\begin{theorem}\label{main-result1}
 Let $G$ be a graph as in Set-up \ref{setup}. Suppose $\kk(x_{i_1}),\ldots,\kk(x_{i_p})$
 are non-pure star complete graphs and $\kk(x_{i_{p+1}}),\ldots,\kk(x_{i_q})$
 are pure star complete graphs for some $p \leq q$. If $V(H) \setminus \{x_{i_{1}},\ldots,x_{i_p}\}$ is an 
 independent set of $H$, then $J(G)^{(k)}$ has linear
 quotients for all $k \geq 1$.
\end{theorem}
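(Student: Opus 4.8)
The plan is to prove that $G_k$ is a vertex decomposable graph for every $k\ge 1$ and then to conclude exactly as in the proof of Theorem \ref{main-result2}: by \cite[Proposition 8.2.5]{Herzog'sBook} the cover ideal $J(G_k)$ has linear quotients; by Lemma \ref{fak-result} the ideal $\widetilde{J(G)^{(k)}}$ equals $J(G_k)$, so it has linear quotients; and by Corollary \ref{pol_reg}(2) the ideal $J(G)^{(k)}$ has linear quotients. I would establish the vertex decomposability of $G_k$ by induction on $m:=k+|V(H)|$, with base cases $|V(H)|=1$ (then $G$ is a star complete graph and Theorem \ref{star-verdec} applies) and $k=1$ (then $G_1\simeq G$, vertex decomposable by \cite[Corollary 3.6(i)]{Haj_yessemi}), exactly as in Theorem \ref{main-result2}.

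For the inductive step assume $|V(H)|\ge 2$ and $k\ge 2$. The structural input is that, since $V(H)\setminus\{x_{i_1},\dots,x_{i_p}\}$ is an independent set of $H$, the set $\{x_{i_1},\dots,x_{i_p}\}$ is a vertex cover of $H$, and each non-pure star complete graph $\kk(x_{i_j})$ with $1\le j\le p$ has a facet of $\Delta_{\kk(x_{i_j})}$ of cardinality $2$, hence contains a vertex $w_j$ with $N_G(w_j)=\{x_{i_j}\}$. Applying Lemma \ref{technical-le}(2) repeatedly — the pendant edge $\{x_{i_j},w_j\}$ being untouched by the removal of apex layers of the earlier $\kk(x_{i_a})$'s — one checks that the vertices
\[
 x_{i_1,1},\dots,x_{i_1,k},\ x_{i_2,1},\dots,x_{i_2,k},\ \dots,\ x_{i_p,1},\dots,x_{i_p,k}
\]
can be removed one at a time, each being a shedding vertex of the graph at its stage. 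By Lemma \ref{tech_lemma-cons}(1), after all these removals the remaining graph is $(G\setminus\{x_{i_1},\dots,x_{i_p}\})_k$, and the independence hypothesis gives
\[
 G\setminus\{x_{i_1},\dots,x_{i_p}\}=\Big(\coprod_{j=p+1}^{q}\kk(x_{i_j})\Big)\coprod\Big(\coprod_{j=1}^{p}\big(\kk(x_{i_j})\setminus x_{i_j}\big)\Big)\coprod\{\text{isolated vertices}\},
\]
each $\kk(x_{i_j})\setminus x_{i_j}$ being a disjoint union of complete graphs. Hence $(G\setminus\{x_{i_1},\dots,x_{i_p}\})_k$ is a disjoint union of blow-ups of pure star complete graphs, blow-ups of complete graphs, and isolated vertices, so it is vertex decomposable by Theorem \ref{star-verdec}, Theorem \ref{complete-vedeco} and Corollary \ref{cor-shedding}(2).

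By Lemma \ref{obs-verdec} it then suffices to show that for each peeled shedding vertex $x_{i_j,l}$ the graph $\Omega:=G_k\setminus\big(\{\text{vertices peeled so far}\}\cup N_{G_k}[x_{i_j,l}]\big)$ is vertex decomposable. Here I would repeat almost verbatim the analysis of Case 1 of Theorem \ref{star-verdec} and of Lemma \ref{del-verdec}: the layers $x_{i_j,l+1},\dots,x_{i_j,k}$ are isolated in $\Omega$, and — discarding them and using Lemma \ref{tech_lemma-cons}(1)--(3) — what remains is, up to isolated vertices and the deletion of finitely many neighbourhoods of simplicial vertices, a disjoint union of blow-ups $(\text{complete graph})_k$ together with one blow-up $(G')_s$ in which $G'$ is a disjoint union of complete graphs with a graph of the form in Set-up \ref{setup} satisfying the hypotheses of the theorem, $|V(H')|<|V(H)|$ (all layers of $x_{i_j}$ have been removed), and $s\le k$. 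Since $s+|V(H')|<k+|V(H)|=m$, the induction hypothesis applies to the $(G')_s$ piece; Theorem \ref{complete-vedeco} handles the complete pieces; and Corollary \ref{van-noupure} absorbs the extra deleted neighbours of simplicial vertices one by one. Thus $\Omega$ is vertex decomposable, and Lemma \ref{obs-verdec} finishes the induction and hence the proof.

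The hard part is this last step: performing the explicit decomposition of each link $\Omega$ and checking that each component is covered either by the induction hypothesis or by a structural result already established. The bookkeeping is delicate because the pure star complete pieces $\kk(x_{i_{p+1}}),\dots,\kk(x_{i_q})$ persist into most of these links (so Lemma \ref{del-verdec} must be invoked for their truncated blow-ups), and because one repeatedly needs Corollary \ref{van-noupure} to discard the accumulated neighbours of simplicial vertices; this interaction of the pure part with the recursion is the technical heart of the argument.
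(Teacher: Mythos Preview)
Your overall strategy---prove $G_k$ is vertex decomposable and finish via polarization---matches the paper's, but your peeling order is different, and that difference breaks the link analysis. You remove the non-pure apices \emph{apex by apex} (all layers of $x_{i_1}$, then all layers of $x_{i_2}$, \dots), whereas the paper removes them \emph{layer by layer}: first $x_{i_1,1},\dots,x_{i_p,1}$, then $x_{i_1,2},\dots,x_{i_p,2}$, and so on. Both orders give shedding vertices at each step, so the issue is not there; it is in the structure of the links $\Omega$.

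Take the very first block in your order, the links at $x_{i_1,l}$ for $1\le l\le k$. Write $N_H(x_{i_1})=\{a_1,\dots,a_\alpha\}\cup\{b_1,\dots,b_\beta\}\cup\{c_1,\dots,c_\gamma\}$ with the $a$'s among $\{x_{i_2},\dots,x_{i_p}\}$, the $b$'s among the pure apices $\{x_{i_{p+1}},\dots,x_{i_q}\}$, and the $c$'s in $V(H)\setminus\{x_{i_1},\dots,x_{i_q}\}$. In your $\Omega_l$ only layers $1,\dots,k+1-l$ of the $a$'s, $b$'s, $c$'s have been removed (coming from $N_{G_k}[x_{i_1,l}]$), while \emph{all} layers of $x_{i_2},\dots,x_{i_p}$ are still present. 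Since $N_H(b_v),N_H(c_w)\subseteq\{x_{i_1},\dots,x_{i_p}\}$, the surviving top layers $b_{v,l'}$ and $c_{w,l'}$ (with $l'\ge k+2-l$) remain adjacent to the low layers $x_{i_\kappa,\mu}$, $\mu\le l-1$, of the other non-pure apices. Hence the $\kk(b_v)$-pieces do \emph{not} detach and the $c$-layers are \emph{not} isolated; your link does not decompose ``up to isolated vertices and deletions of neighbours of simplicial vertices'' into $(\text{complete})_k$-pieces and a single $(G')_s$. Concretely, what you are left with is $(G^{**})_k$ with layers $1,\dots,k+1-l$ of the $b$'s and $c$'s removed (where $G^{**}=(H\setminus x_{i_1})(\kk(x_{i_2}),\dots,\kk(x_{i_q}))$), and neither Corollary~\ref{van-noupure} nor Lemma~\ref{del-verdec} applies to peel those layers: the $b$'s are pure apices (no pendant), and the $c$'s carry no star-complete at all, so for $s\ge 2$ the vertex $b_{v,s}$ (or $c_{w,s}$) is not the neighbour of a simplicial vertex in the intermediate graph.

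The paper's layer-by-layer order is precisely what forces the detachment: at the moment one takes the link at $x_{i_j,l}$, layers $1,\dots,l-1$ of \emph{every} non-pure apex have already been deleted, so the surviving $b$- and $c$-layers (which live at height $\ge k+2-l$) can only reach non-pure apices at height $\le l-1$, and those are gone. This yields the clean factorization $\mathcal{H}=(G')_k\setminus\{z\text{'s},\mathcal{S}\}\ \coprod\ \bigl(\coprod_t \kk(b_t)_k\setminus\{b_{t,1},\dots,b_{t,k+1-l}\}\bigr)$, after which the $\kk(b_t)$-pieces are handled by Lemma~\ref{del-verdec} and the $(G')_k$-piece by induction on $|V(H)|$ together with Corollary~\ref{van-noupure} (the removed $z$'s and $\mathcal{S}$ are all layers of \emph{non-pure} apices, hence neighbours of simplicial pendants). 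Switching your peeling order to the paper's layer-by-layer scheme repairs the argument; with your apex-by-apex order the claimed decomposition of $\Omega$ fails.
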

\begin{proof}
 Let $V(H)=\{x_1,\ldots,x_n\}$.
 By proof of  Theorem \ref{main-result2}, it is enough to prove that $G_k$ is vertex decomposable for all $k \geq 1$.
 We prove this by induction on $n$. If $n=1$ (since empty set is an independent set),
 then by Theorem \ref{complete-vedeco},
 $G_k$ is vertex decomposable for all $k \geq 1$.   
 
 Assume that $n \geq 2$. For a fixed $1 \leq l \leq k$, set $z_{(l-1)p+j}=x_{i_j,l}$ for all $1 \leq j \leq p$.
 Let $\Psi_0=G_k$,
 $\Psi_i =\Psi_{i-1}\setminus z_i \text{ and }\Omega_i=\Psi_{i-1} \setminus N_{\Psi_{i-1}}[z_i] \text{ for all }
  1 \leq i \leq kp.$ It follows from  Lemmas \ref{technical-le}, \ref{tech_lemma-cons} that
 $z_i$ is a shedding vertex of $\Psi_{i-1}$ for all $1 \leq i \leq kp$ 
and $\Psi_{kp}=(G \setminus \A)_k$, where $\A=\{x_{i_1},\ldots,x_{i_p}\}$.
Since  $V(H) \setminus \A$ is an 
 independent set of $H$,  $G \setminus \A$ is the disjoint union of pure
 star complete graphs and isolated vertices. By Theorem \ref{star-verdec} and Corollary \ref{cor-shedding}(2), 
 $\Psi_{kp}$ is a vertex decomposable graph. 
 It remains to show that $\Omega_i$ is  vertex decomposable for all
 $1 \leq i \leq kp$.  

 For some $1 \leq j \leq p$ and $1 \leq l \leq k$, let
 $$\Omega_{(l-1)p+j}=G_k \setminus \{ N_{G_k}[x_{i_j,l}],z_1,\ldots, z_{(l-1)p+j-1}\}=\G.$$  
  Let  $V(\kk(x_{i_j}))=\{x_{i_j},y_1,\ldots,y_{\mu}\}$  
   and  
    $N_G[x_{i_j}]=\{ V(\kk(x_{i_j})),a_1,\ldots,a_{\alpha},b_1,\ldots,b_{\beta},c_1,\ldots,
  c_{\gamma}\},$ 
  where
  $\{a_1,\ldots,a_{\alpha}\} \subseteq \A, 
 \{b_1,\ldots,b_{\beta}\}  \subseteq \B=\{x_{i_{p+1}},\ldots,x_{i_q}\}$  and 
 $\{c_1,\ldots,c_{\gamma}\}   \subseteq \C=V(H)\setminus
  \{\A \cup \B\}. $ 
  Note that 
  $ N_{G_k}(x_{i_j,l})$
  $$=\bigg\{y_{r,l'}, a_{u,l'},b_{v,l'},c_{w,l'} \mid 1 \leq r \leq \mu, 1 \leq u \leq \alpha,~1 \leq v \leq \beta,~1 \leq w \leq \gamma,1 \leq l' 
  \leq k+1-l\bigg\}.$$
Since $\C$ is an independent set of $H$, 
  $N_H(b_1,\ldots,b_{\beta},c_1,\ldots,c_{\gamma}) \subseteq \A$. Suppose
  $\{c_{w,l'}, x_{i_\kappa,\mu}\} \in E(\G)$ for some $1 \leq w \leq \gamma$, 
 $1 \leq \kappa \leq p$. Since $k+2-l \leq l' \leq k$ and $l \leq \mu \leq k$, we get $l'+\mu >k+1$. This is a
  contradiction to $\{c_{w,l'}, x_{i_\kappa,\mu}\} \in E(\G)$. Therefore, 
  $\{c_{w,l'}, x_{i_\kappa,\mu}\} \notin E(\G)$ for all $1 \leq w \leq \gamma$,  $1 \leq \kappa \leq p$. Similarly, we can show that 
  $\{b_{v,l'}, x_{i_\kappa,\mu}\} \notin E(\G)$ for all $1 \leq v \leq \beta$,  $1 \leq \kappa \leq p$.
  
  Set 
  \begin{align*}      
      H'&=H \setminus \{x_{i_j},b_1,\ldots,b_{\beta},c_1,\ldots,c_{\gamma}\}, ~
  \B \setminus \{b_1,\ldots,b_{\beta}\}=
 \{x_{i'_{p'+1}},\ldots,x_{i'_{q'}}\},\\
 G'&=H'(\kk(x_{i_1}),\ldots,\kk(x_{i_{j-1}}),\kk(x_{i_{j+1}}),\ldots,\kk(x_{i_p}),\kk(x_{i'_{p'+1}}),\ldots,
 \kk(x_{i'_{q'}}))\text{ and }\\
 \mathcal{H}&=(G')_k \setminus \{z_1,\ldots,z_{(l-1)p+j-1},\mathcal{S}\} 
      \coprod\limits_{1 \leq t \leq \beta} \kk(b_t)_k \setminus \{b_{t,1},\ldots,b_{t, k+1-l}\}, 
      \text { where }\\
      \mathcal{S}&=\{a_{1,1},\ldots,a_{1,k+1-l},\ldots,a_{\alpha,1},\ldots,a_{\alpha,k+1-l}\}.
 \end{align*}
  Let $\mathcal{L}$ be the induced subgraph of $\G$ over the vertices 
  $\{y_{r,l'}\mid 1 \leq r \leq \mu, k-l+2 \leq l' \leq k\}$.   
 By above arguments we can conclude that
 $E(\G)=E(\mathcal{H}) \cup E(\mathcal{L})$.
  If $\kk(x_{i_j}) \setminus \{x_{i_j}\}$ is  totally disconnected, then
  $E(\G)=E(\mathcal{H})$.   
  Suppose $\kk(x_{i_j}) \setminus \{x_{i_j}\}$ is  not 
  totally disconnected. If either $l<\lceil \frac{k+3}{2} \rceil$ or $l \geq \lceil \frac{k+3}{2} \rceil$, then by \textsc{Case 1} of proof of Theorem
  \ref{star-verdec}, either $E(\G)=E(\mathcal{H}) \text{ or } 
  \G \simeq \mathcal{H}
   \coprod (\kk(x_{i_j})\setminus \{x_{i_j}\})_{2(l-1)-k} \cup \{\text{isolated vertices} \}.$
 
 By Theorem \ref{complete-vedeco}, Lemma \ref{del-verdec} and Corollary \ref{cor-shedding}(2),
  $(\kk(x_{i_j})\setminus \{x_{i_j}\})_{2(l-1)-k}$ and 
  $\coprod\limits_{1 \leq t\leq \beta} \kk(b_t)_k \setminus \Big\{b_{t,1},\ldots,b_{t, k+1-l}\Big\} $ are 
  vertex decomposable graphs. By Corollary \ref{cor-shedding}(2), it suffices to show that 
  $(G')_k \setminus \{z_1,\ldots,z_{(l-1)p+j-1},\mathcal{S}\}$ is a vertex decomposable
  graph. In order to achieve this, we consider the following two cases. 
  \vskip 0.3mm \noindent
 \textsc{Case I:} Suppose $\{x_{i_1},\ldots,x_{i_{j-1}},x_{i_{j+1}},\ldots,x_{i_p}\} \cap V(H') = \emptyset$.
 If $\{x_{i'_{p'+1}},\ldots,x_{i'_{q'}}\} \cap V(H')=\emptyset$, then $H'$ is totally disconnected. 
 Therefore $(G')_k \setminus \{z_1,\ldots,z_{(l-1)p+j-1},\mathcal{S}\}$ is a vertex decomposable
 graph. Suppose $\{x_{i'_{p'+1}},\ldots,x_{i'_{q'}}\} \cap V(H')\neq \emptyset$.
 Since $\C$ is an independent set of $H$, we have
 $H'$ is a disjoint union of isolated vertices. 
 Hence $G'$ is the disjoint union of pure star complete graphs and isolated vertices.
 By Theorem \ref{star-verdec} and Corollary \ref{cor-shedding}(2), $(G')_k$ is a vertex decomposable graph. 
 Hence 
 $(G')_k \setminus \{z_1,\ldots,z_{(l-1)p+j-1},\mathcal{S}\}$ is a vertex decomposable
 graph.
 \vskip 0.3mm \noindent
 \textsc{Case II:} Suppose $\{x_{i_1},\ldots,x_{i_{j-1}},x_{i_{j+1}},\ldots,x_{i_p}\} \cap V(H') \neq \emptyset$.
 Clearly $$V(H') \setminus \{x_{i_1},\ldots,x_{i_{j-1}},x_{i_{j+1}},\ldots,x_{i_p}\}$$
 is an independent set of $H'$. By induction on $n$, $(G')_k$ is a vertex decomposable graph.
 By Lemma \ref{technical-le}, $z_1,\ldots,z_{(l-1)p+j-1}$ and $\mathcal{S}$ are 
 neighbors of simplicial vertices. It follows from 
  Corollary \ref{van-noupure} that
 $(G')_k \setminus \{z_1,\ldots,z_{(l-1)p+j-1},\mathcal{S}\}$ is a vertex decomposable
 graph.  
 
 In both cases, we get $(G')_k \setminus \{z_1,\ldots,z_{(l-1)p+j-1},\mathcal{S}\}$ is a 
 vertex decomposable. 
 Hence $\Omega_i$ is a vertex decomposable
 graph for all $1 \leq i \leq kp$.
\end{proof}
 
 The following example shows that the hypotheses of Theorem \ref{main-result2} and
 Theorem \ref{main-result1} can not easily be weakened.
 \begin{example}\label{ex:compnt}
  Let $G=H(\kk(x_1),\kk(x_3))$ and $L=L' \cup W(y_4)$ be the graphs as shown in figure, where 
  $$E(H)=\Big\{ \{x_1,x_2\}, \{x_2,x_3\},\{x_3,x_4\}, \{x_4,x_1\} \Big\},
  E(\kk(x_1))=\Big\{ \{x_1,x_5\},\{x_1,x_6\},\{x_5,x_6\} \Big\},$$ $E(\kk(x_3))=\Big\{\{x_3,x_7\},\{x_7,x_8\}
  ,\{x_3,x_8\} \Big\}$
  and 
  $L' =L \setminus y_6$. 

 \begin{minipage}{\linewidth}
\begin{minipage}{0.47\linewidth}
\begin{figure}[H]
 \begin{tikzpicture}[scale=.5]
\draw (1,4)-- (3,4);
\draw (1,4)-- (1,3);
\draw (3,4)-- (3,3);
\draw (1,3)-- (3,3);
\draw (3,5)-- (4,4);
\draw (3,5)-- (3,4);
\draw (4,4)-- (3,4);
\draw (1,3)-- (0.47,1.99);
\draw (1,3)-- (1.74,2.01);
\draw (0.47,1.99)-- (1.74,2.01);
\draw (9,5)-- (8,4);
\draw (9,5)-- (10,4);
\draw (8,4)-- (10,4);
\draw (8,3)-- (10,4);
\draw (8,4)-- (10,3);
\draw (10,4)-- (10,3);
\draw (8,4)-- (8,3);
\draw (10,3)-- (10,2);
\draw (1.81,1.8) node[anchor=north west] {$G$};
\draw (8.8,2.03) node[anchor=north west] {$L$};
\begin{scriptsize}
\fill [color=black] (1,4) circle (1.5pt);
\draw[color=black] (1.25,4.23) node {$x_4$};
\fill [color=black] (3,4) circle (1.5pt);
\draw[color=black] (2.65,4.3) node {$x_1$};
\fill [color=black] (1,3) circle (1.5pt);
\draw[color=black] (0.55,3.23) node {$x_3$};
\fill [color=black] (3,3) circle (1.5pt);
\draw[color=black] (3.45,3.03) node {$x_2$};
\fill [color=black] (3,5) circle (1.5pt);
\draw[color=black] (3.25,5.24) node {$x_5$};
\fill [color=black] (4,4) circle (1.5pt);
\draw[color=black] (4.26,4.23) node {$x_6$};
\fill [color=black] (0.47,1.99) circle (1.5pt);
\draw[color=black] (0.0,1.9) node {$x_8$};
\fill [color=black] (1.74,2.01) circle (1.5pt);
\draw[color=black] (2.19,1.95) node {$x_7$};
\fill [color=black] (9,5) circle (1.5pt);
\draw[color=black] (9.25,5.34) node {$y_1$};
\fill [color=black] (8,4) circle (1.5pt);
\draw[color=black] (7.98,4.39) node {$y_3$};
\fill [color=black] (10,4) circle (1.5pt);
\draw[color=black] (10.35,4.23) node {$y_2$};
\fill [color=black] (8,3) circle (1.5pt);
\draw[color=black] (7.63,2.96) node {$y_5$};
\fill [color=black] (10,3) circle (1.5pt);
\draw[color=black] (10.35,3.23) node {$y_4$};
\fill [color=black] (10,2) circle (1.5pt);
\draw[color=black] (10.35,2.23) node {$y_6$};
\end{scriptsize}
\end{tikzpicture}
\caption{Vertex decomposable graphs}
\end{figure}
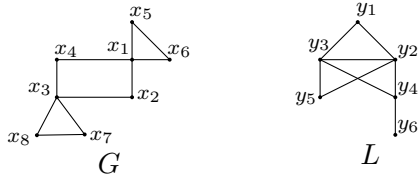
\end{minipage}
\begin{minipage}{0.49\linewidth}
By \cite[Corollary 3.7]{Haj_yessemi} and \cite[Corollary 7(2)]{Wood2}, 
$G$ and $L$ are vertex decomposable graphs respectively. 
Therefore, by \cite[Proposition 8.2.5]{Herzog'sBook} $J(G)$ and $J(L)$ has linear quotients. 
 It can also be noted that $G$ and $L$ does not satisfy the hypothesis of Theorem \ref{main-result2} and 
Theorem \ref{main-result1} respectively. 

\end{minipage}

A computation on \textsc{Macaulay2}, \cite{M2}, 
shows that $G_2$ is not sequentially Cohen-Macaulay.
By Lemma \ref{fak-result} and \cite[Theorem 2.1]{HerHibi}, $\widetilde{J(G)^{(2)}}$ is 
not a componentwise linear ideal.
\end{minipage}
 Therefore, by Corollary \ref{pol_reg},
$J(G)^{(2)}$ is not a componentwise linear ideal. Hence $J(G)^{(2)}$ has not linear quotients. 
Similarly we can show that
$J(L)^{(2)}$ has not linear quotients.
\end{example}
The following is one of our main results which is a consequence of Theorem \ref{main-result1}.
\begin{corollary} \label{main-cor}
 Let $G$ be a graph and $S \subseteq V(G)$. 
 \begin{enumerate}
  \item If $G \setminus S$ is an independent set of $G$,
 then for all $k \geq 1$, $J(G \cup W(S))^{(k)}$  has linear quotients.
 \item If  $S$ is a vertex cover of $G$, then for all $k \geq 1$, $J(G \cup W(S))^{(k)}$  has linear quotients.
 \end{enumerate}
 \end{corollary}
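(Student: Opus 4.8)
The plan is to deduce both parts directly from Theorem \ref{main-result1}, the only work being to recognize $G \cup W(S)$ as one of the graphs in Set-up \ref{setup}.

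First I would observe that adding a whisker $\{x,z_x\}$ at a vertex $x$ is precisely the operation of attaching to $G$, at the vertex $x$, the complete graph $K_2$ on $\{x,z_x\}$. Viewed as a star complete graph $\kk(x)$, this $K_2$ is the degenerate case of a single complete graph joined at $x$, and its clique complex $\Delta_{\kk(x)}$ has the single facet $\{x,z_x\}$ of cardinality $2$; since not every facet has cardinality at least $3$, $\kk(x)$ is a \emph{non-pure} star complete graph. Hence, writing $S=\{x_{i_1},\ldots,x_{i_q}\}$, we have $G \cup W(S) = G(\kk(x_{i_1}),\ldots,\kk(x_{i_q}))$ in the sense of Set-up \ref{setup}, with $H=G$ and with every attached star complete graph non-pure --- that is, in the notation of Theorem \ref{main-result1} we are in the case $p=q$ with no pure pieces, so $p\le q$ holds trivially and the conditions on the pure pieces are vacuous. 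The remaining hypothesis of Theorem \ref{main-result1}, namely that $V(H)\setminus\{x_{i_1},\ldots,x_{i_p}\}$ be an independent set of $H$, is then exactly the assumption that $V(G)\setminus S$ is an independent set of $G$. So Theorem \ref{main-result1} applies and gives part (1): $J(G\cup W(S))^{(k)}$ has linear quotients for all $k\ge 1$.

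For part (2) I would simply note that if $S$ is a vertex cover of $G$, then every edge of $G$ meets $S$, so no edge has both endpoints in $V(G)\setminus S$; thus $V(G)\setminus S$ is an independent set of $G$ and part (2) is immediate from part (1). I do not expect a genuine obstacle here: the only thing requiring care is the bookkeeping translation between the whisker notation and Set-up \ref{setup}, together with degenerate cases such as $S=\emptyset$ or $S$ containing isolated vertices of $G$, and these are harmless because a degree-one vertex is simplicial, so $\kk(x)=K_2$ is covered by all the auxiliary results used inside the proof of Theorem \ref{main-result1} (Lemmas \ref{simplicial-vertex}, \ref{technical-le}, and Theorem \ref{star-verdec}). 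All of the real content is already contained in Theorem \ref{main-result1}, whence the conclusion follows by the usual reduction via Lemma \ref{fak-result}, \cite[Proposition 8.2.5]{Herzog'sBook} and Corollary \ref{pol_reg}.
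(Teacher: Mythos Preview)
Your proposal is correct and is exactly the argument the paper intends: Corollary~\ref{main-cor} is stated without proof precisely because it is the specialization of Theorem~\ref{main-result1} to the case where every attached $\kk(x_{i_j})$ is a single edge $K_2$ (hence non-pure, so $p=q$), and the independence condition on $V(H)\setminus\{x_{i_1},\ldots,x_{i_p}\}$ becomes $V(G)\setminus S$ independent; part~(2) is then the standard equivalence between ``$S$ is a vertex cover'' and ``$V(G)\setminus S$ is independent''.
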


 A \textit{matching} in a graph $G$ is a subgraph consisting of pairwise disjoint edges. The largest size of a 
  matching in $G$ is called its matching number and denoted by $\ma(G)$.
  If the subgraph is an induced subgraph, the matching is an \textit{induced matching}. The largest size of an induced matching in $G$ is called its induced 
matching number and denoted by $\nu(G)$.  A graph $G$ satisfies $\nu(G)=\ma(G)$ is called a \textit{ Cameron-Walker graph.}
 Cameron and Walker \cite{CamWalker} and Hibi et al.,\cite{Hibi_cameronwalker} gave a classification of the 
 connected graphs with $\nu(G)=\ma(G)$:
 \begin{theorem}\label{CamWal-char} \cite[Theorem 1]{CamWalker}, \cite[p. 258]{Hibi_cameronwalker}
 A connected graph $G$ is Cameron-Walker if and only if it is one of the following graphs:
 \begin{enumerate}
  \item a star;
  \item a star triangle (A star triangle is a graph consisting of some triangles joined at one common vertex);
  \item a finite graph consisting of a connected bipartite graph with bipartition $(A,B)$ such that there 
  is at least one leaf edge attached to each vertex $i \in A$ and that there may be possibly some pendant 
  triangles attached to each vertex  $j \in B$.
 \end{enumerate}
\end{theorem}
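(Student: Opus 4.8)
The plan is to prove both implications of the biconditional, anchored by the elementary inequality $\nu(G)\le\ma(G)$, which holds because every induced matching is in particular a matching. Hence $\nu(G)=\ma(G)$ is \emph{equivalent} to the existence of a maximum matching that is simultaneously induced: if $\nu(G)=\ma(G)=m$ then any induced matching of size $m$ is a matching of maximum size, and conversely a maximum matching that happens to be induced forces $\nu(G)\ge\ma(G)$. I would use this reformulation throughout, since it replaces a two-parameter equality by a single combinatorial object to be either exhibited or dissected.

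For the \textbf{(if)} direction I would treat the three families by producing the natural induced matching $M$ and showing it is maximum; together with $M$ being induced this gives $\nu(G)\ge|M|=\ma(G)\ge\nu(G)$, so all are equal. For a star $K_{1,n}$ take $M=\{c\ell\}$ for the center $c$ and one leaf $\ell$; for a star triangle take one base edge from each triangle; for a graph of type (3) match each $i\in A$ to one attached leaf and each pendant triangle to its base edge. In each case $M$ is induced because endpoints lying in distinct matching edges are pairwise non-adjacent, which is immediate from the pendant/leaf structure. To see that $M$ is maximum I would rule out $M$-augmenting paths directly: every $M$-unsaturated vertex is either an unmatched leaf of degree one, so any alternating walk terminates at once at the adjacent matched vertex, or a vertex of the independent $B$-side whose alternating continuations either run into a matched leaf and stop or close up the triangle through which they entered. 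In neither situation does an augmenting path arise. I prefer this over a K\"onig-type count because the triangles make $\ma(G)$ strictly smaller than the minimum vertex cover, so ``matching $=$ cover'' fails; for the star triangle one may instead invoke the crude bound $\ma(G)\le\lfloor|V(G)|/2\rfloor=t$.

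For the \textbf{(only if)} direction, assume $G$ is connected with $\nu(G)=\ma(G)=m$ and fix a maximum matching $M=\{a_1b_1,\dots,a_mb_m\}$ that is induced; write $W=V(G)\setminus V(M)$. Three local facts drive everything. First, $W$ is independent, else an edge inside $W$ would enlarge $M$. Second, since $M$ is induced, $N_G(a_i)\setminus\{b_i\}$ and $N_G(b_i)\setminus\{a_i\}$ lie entirely in $W$. Third, and crucially, there cannot exist distinct $w\in N_G(a_i)\cap W$ and $w'\in N_G(b_i)\cap W$, for otherwise $M-\{a_ib_i\}+\{a_iw\}+\{b_iw'\}$ is a strictly larger matching. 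This third fact forces each edge $a_ib_i$ into exactly one of two types: a \emph{leaf edge}, where one endpoint, say $b_i$, has no neighbor in $W$ and hence $N_G(b_i)=\{a_i\}$ is a leaf while $a_i$ may have several neighbors in $W$; or a \emph{triangle edge}, where $N_G(a_i)\cap W=N_G(b_i)\cap W=\{w_i\}$ is a single common vertex, so $\{a_i,b_i,w_i\}$ spans a triangle with $\deg_G(a_i)=\deg_G(b_i)=2$.

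It remains to globalize this local dichotomy, and I expect this to be the main obstacle. I would set $A=\{a_i:a_ib_i\text{ is a leaf edge}\}$ and $B=W$; the induced property makes $A$ independent and $W$ is independent, so the graph on $A\cup B$ is bipartite, every $i\in A$ carries its leaf $b_i$, and each triangle edge yields a pendant triangle at its apex $w_i\in B$. Connectivity pins down the degenerate cases: an apex of degree two spans an isolated triangle, so for $m\ge2$ every apex must attach to a vertex of $A$ (it cannot attach to a leaf nor to a degree-two triangle vertex), which places it correctly in $B$ and produces a graph of type (3). If there are no leaf edges then $A=\varnothing$, and since apexes lie in the independent set $W$ and can only be joined through $A$-vertices, connectivity forces all triangles to share one apex, i.e.\ a star triangle; and $m=1$ yields, after the same analysis, either a star or a single triangle. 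The delicate points in writing this up are the bookkeeping of several leaves at one $A$-vertex (which are simply additional degree-one $B$-vertices) and the verification that no vertex of $W$ escapes the classification, both of which follow from the three facts above but require care.
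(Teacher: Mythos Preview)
The paper does not prove this theorem at all: it is quoted verbatim from \cite[Theorem 1]{CamWalker} and \cite[p.~258]{Hibi_cameronwalker} and then used as a black box in Corollary~\ref{CamWal-verdec}. There is therefore nothing in the paper to compare your proposal against.

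Your argument is nonetheless essentially correct and follows the natural structural route. The reformulation (a maximum matching that is induced), the three local facts, and the leaf/triangle dichotomy for the edges of $M$ are exactly what is needed. A few points deserve explicit attention when you write it up. First, your choice $B=W$ places any \emph{unmatched} leaves into $B$, so the bipartite graph you produce is not literally the $(A,B)$ of the statement; but a degree-one $B$-vertex is interchangeable with an attached leaf, so the two descriptions coincide. Second, you assert but do not verify that the induced graph on $A\cup B$ is connected; this follows because leaves are dead ends and a path segment through the two base vertices of a pendant triangle can always be shortcut through the apex, so every path in $G$ between vertices of $A\cup B$ can be rerouted inside $A\cup B$. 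Third, the final case split is cleaner if organized as ``$A=\varnothing$ forces a single apex, hence a star triangle; $A\neq\varnothing$ gives type~(3)'' rather than by the value of $m$, since stars and single triangles are already degenerate instances of type~(3) and need no separate treatment.
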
 
We now study the $J(G)^{(k)}$ when $G$ is a Cameron-Walker graph.
\begin{corollary}\label{CamWal-verdec}
 If $G$ is a Cameron-Walker graph, then $J(G)^{(k)}$ has linear quotients for all $k \geq 1$.
\end{corollary}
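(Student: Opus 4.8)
The plan is to show that any Cameron-Walker graph $G$ falls into the framework of Theorem \ref{main-result1}, after which the conclusion is immediate. We use the classification in Theorem \ref{CamWal-char}. First, if $G$ is a disconnected graph whose components are Cameron-Walker, then by the Observation following Construction \ref{construction}, $G_k$ is the disjoint union of the $(\text{component})_k$, so by Corollary \ref{cor-shedding}(2) it suffices to treat connected $G$; alternatively one simply applies Theorem \ref{main-result1} componentwise after noting that linear quotients is preserved under taking disjoint unions of the defining graphs. So assume $G$ is connected, and run through the three cases of Theorem \ref{CamWal-char}.

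If $G$ is a star with center $v$, then $G=\kk(v)$ is a non-pure star complete graph (each facet of $\Delta_{\kk(v)}$ is an edge), so $G$ is of the form in Set-up \ref{setup} with $H$ a single vertex, $p=q=1$, and the independence hypothesis $V(H)\setminus\{x_{i_1}\}=\emptyset$ holds trivially; apply Theorem \ref{main-result1}. If $G$ is a star triangle, then $G=\kk(v)$ is a pure star complete graph (every facet of the clique complex is a triangle, cardinality $3$), so again $H$ is a single vertex and Theorem \ref{main-result1} applies with $p=0$, $q=1$. The substantive case is the third: $G$ consists of a connected bipartite graph with bipartition $(A,B)$, a leaf edge (whisker) at each vertex of $A$, and some pendant triangles at vertices of $B$. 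Here I would take $H$ to be the bipartite graph on $A\cup B$ together with all the whisker-leaf vertices attached to $A$ — that is, $H$ is the bipartite core with its whiskers, which is still bipartite. At each $j\in B$ carrying pendant triangles, the union of those triangles sharing the common vertex $j$ is precisely a pure star complete graph $\kk(j)$ (its clique complex has facets of size $3$). Thus $G=H(\kk(j_1),\ldots,\kk(j_q))$ where $\{j_1,\ldots,j_q\}\subseteq B$ are the vertices bearing triangles, all the attached star completes are pure, so $p=0$. It then remains to check $V(H)\setminus\{x_{i_1},\ldots,x_{i_p}\}=V(H)$ is an independent set of $H$ — but this is false in general, so instead one should arrange the whiskers to be the non-pure star completes.

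The cleaner route for case (3): view each leaf edge at a vertex $a\in A$ as a non-pure star complete $\kk(a)$ (a single edge, facet of cardinality $2$), so the set of these is the $\kk(x_{i_1}),\ldots,\kk(x_{i_p})$ with $p$ equal to $|A|$; view each pendant-triangle cluster at $b\in B$ as a pure star complete; and take $H$ to be the bipartite graph on $A\cup B$ alone. Then $G=H(\kk(x_{i_1}),\ldots,\kk(x_{i_q}))$ in the sense of Set-up \ref{setup}, with the first $p=|A|$ of the star completes non-pure and the rest pure. The hypothesis of Theorem \ref{main-result1} requires $V(H)\setminus\{x_{i_1},\ldots,x_{i_p}\}=B$ to be an independent set of $H$; since $H$ is bipartite with parts $A,B$, indeed $B$ is independent. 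Hence Theorem \ref{main-result1} gives that $J(G)^{(k)}$ has linear quotients for all $k\ge 1$.

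The main obstacle is bookkeeping rather than mathematics: one must verify that a star triangle and a pendant-triangle cluster really are \emph{pure} star complete graphs (all clique-complex facets have cardinality $\ge 3$) and that a leaf edge is a \emph{non-pure} star complete graph (facet of cardinality $2$), and that when these gadgets are glued to the bipartite skeleton $H$ the resulting graph is exactly the object described in Set-up \ref{setup} with no extra edges — in particular, the whisker vertices and triangle vertices have no neighbours outside their gadget. One also needs the trivial reduction to connected graphs. Once the combinatorial identification with Set-up \ref{setup} is made and the independence of $B$ in the bipartite $H$ is observed, Theorem \ref{main-result1} finishes the proof with nothing further to compute.
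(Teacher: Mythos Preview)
Your approach is essentially the same as the paper's: decompose a Cameron-Walker graph into connected components via Theorem \ref{CamWal-char}, observe that stars and star triangles are star complete graphs (handled by Theorem \ref{star-verdec}), and for the type (3) component take $H$ to be the bipartite skeleton on $A\cup B$, the whiskers at $A$ as the non-pure $\kk(x_{i_j})$'s and the pendant-triangle clusters at $B$ as the pure ones, so that $V(H)\setminus\{x_{i_1},\ldots,x_{i_p}\}=B$ is independent and Theorem \ref{main-result1} applies.

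One small point of care: to recombine the components you invoke Corollary \ref{cor-shedding}(2), which is a statement about vertex decomposability of $G_k$, not directly about linear quotients; so, as the paper does, you should cite the \emph{proof} of Theorem \ref{main-result1} (which establishes that each component's $(\cdot)_k$ is vertex decomposable) rather than just its statement, and then pass to linear quotients at the end via \cite[Proposition 8.2.5]{Herzog'sBook}, Lemma \ref{fak-result} and Corollary \ref{pol_reg}. Your alternative claim that ``linear quotients is preserved under disjoint unions of the defining graphs'' for symbolic powers of cover ideals is true but is not something proved in the paper, so the vertex-decomposability route is the cleaner justification here.
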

\begin{proof}
 By Theorem \ref{CamWal-char}, we have
 $
  G= \coprod\limits_{1 \leq i\leq p} H_i  \coprod\limits_{1 \leq i \leq q} K_i  \coprod\limits_{1 \leq i \leq r} L_i,
$
where $H_i$'s are star graphs, $K_i$'s are star triangles and 
$L_i$'s are graphs as in Theorem \ref{CamWal-char}(3). Note that
 $L_i$ satisfies the hypothesis of Theorem \ref{main-result1} .
 By proof of Theorem \ref{main-result1}, Theorem \ref{star-verdec} and Corollary \ref{cor-shedding}, $G_k$ is a vertex decomposable graph for all $k \geq 1$.
 Hence, by \cite[Proposition 8.2.5]{Herzog'sBook} $J(G)^{(k)}$ has linear quotients for all $k \geq 1$. 
\end{proof}
\begin{definition}(Cook and Nagel \cite{CookNagel})\label{cook}
 A clique vertex-partition of $G$ is a set 
$\pi=\{W_1,\ldots,W_t\}$ of disjoint (possibly empty) cliques of $G$ such that their disjoint union forms 
$V(G)$. From $G$ and $\pi$, let $G^{\pi}$ denote the graph on the vertex set 
$V(G^{\pi})=V(G) \cup \{w_1,\ldots,w_t\}$ and 
$E(G^{\pi})=E(G) \cup \bigcup\limits_{1 \leq i \leq t}\{\{v,w_i\} \mid v \in W_i\}.$
We call $G^{\pi}$ a clique whiskering of $G$
\end{definition}
Note that $G$ may have many different clique vertex-partitions, and every graph has at least one 
clique vertex-partition namely trivial partition, $\pi=\{\{x_1\},\ldots,\{x_n\}\}$ where 
$V(G)=\{x_1,\ldots,x_n\}$.
 For more details on clique vertex-partition, we refer the reader to \cite{CookNagel}.
 
\begin{theorem}\label{main-clique}
Let $G$ be a graph with clique vertex partition $\pi$.
 Then
 $J(G^{\pi})^{(k)}$ has linear quotients for all $k \geq 1$.
\end{theorem}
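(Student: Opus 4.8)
The plan is to deduce everything from the assertion that $(G^{\pi})_k$ is a vertex decomposable graph for every $k\ge 1$; granting this, the theorem follows exactly as in the last lines of the proof of Theorem \ref{main-result2}: \cite[Proposition 8.2.5]{Herzog'sBook} gives that $J((G^{\pi})_k)$ has linear quotients, Lemma \ref{fak-result} identifies it with $\widetilde{J(G^{\pi})^{(k)}}$, and Corollary \ref{pol_reg} transfers linear quotients back to $J(G^{\pi})^{(k)}$. So the whole content is the vertex decomposability of $(G^{\pi})_k$.

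The structural observation I would use is that $G^{\pi}$ is obtained from $G$ by adjoining, for each block $W_j$ of $\pi$, one new vertex $w_j$ with $N_{G^{\pi}}(w_j)=W_j$; in particular every $w_j$ is a simplicial vertex of $G^{\pi}$, and deleting either a closed neighbourhood $N_{G^{\pi}}[x]$ with $x\in V(G)$, or a whole block $W_j$, from $G^{\pi}$ again yields a clique whiskering of an induced subgraph of $G$ on strictly fewer vertices, with the clique vertex partition induced by $\pi$. Translating this through Construction \ref{construction} as in Lemma \ref{tech_lemma-cons}, one obtains the two key identities $(G^{\pi})_k\setminus N_{(G^{\pi})_k}[x_{i,1}]\simeq\big((G\setminus N_G[x_i])^{\pi'}\big)_k\cup\{\text{isolated vertices}\}$ and $(G^{\pi})_k\setminus\{\text{all levels of }W_j\}\simeq\big((G\setminus W_j)^{\sigma}\big)_k\cup\{\text{isolated vertices}\}$, where $\pi'$ and $\sigma$ are the induced clique vertex partitions.

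I would then prove, by induction on $|V(G)|+k$, the slightly strengthened statement: if one deletes from $(G^{\pi})_k$ first the level-$1$ copies $x_{1,1},\dots,x_{n,1}$ of $V(G)$ in any order, and then the level-$1$ copies $w_{1,1},\dots,w_{t,1}$ of the whisker vertices in any order, then every graph occurring in this process is vertex decomposable; in particular so is $(G^{\pi})_k$. The case $k=1$ is immediate from \cite{CookNagel}, since each intermediate graph is either a clique whiskering of an induced subgraph of $G$ or totally disconnected. For $k\ge 2$ one runs Lemma \ref{obs-verdec} along this deletion order. First, $x_{i,1}$ is a shedding vertex at its stage: the whisker vertex $w_j$ of the block containing $x_i$ is simplicial in $G^{\pi}$, so $w_{j,k}$ is simplicial in $(G^{\pi})_k$ by Lemma \ref{simplicial-vertex}, stays simplicial after deleting only level-$1$ copies (its neighbourhood remains inside the clique $[W_j]$), and is joined to $x_{i,1}$, so Corollary \ref{cor-shedding}(1) applies; its link is $\big((G\setminus N_G[x_i])^{\pi'}\big)_k$ with finitely many level-$1$ copies of $V(G\setminus N_G[x_i])$ further deleted and some isolated vertices adjoined, i.e. an intermediate stage of the deletion process for the smaller graph $(G\setminus N_G[x_i])^{\pi'}$, hence vertex decomposable by induction (isolated vertices being harmless by Corollary \ref{cor-shedding}(2)). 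Secondly, once every $x_{i,1}$ has been deleted, each $w_{j,1}$ becomes a shedding vertex: for $x_{\ell}\in W_j$ the copy $x_{\ell,k}$ then has $w_{j,1}$ as its only neighbour (the leaf argument of Lemma \ref{technical-le}(1)), so it is simplicial and Corollary \ref{cor-shedding}(1) applies; the corresponding link is, up to isolated vertices, $\big((G\setminus W_j)^{\sigma}\big)_k$ with all level-$1$ copies of $V(G\setminus W_j)$ and finitely many $w_{m,1}$'s deleted, again an intermediate stage of the deletion process for the smaller graph $(G\setminus W_j)^{\sigma}$, vertex decomposable by induction. Finally, after all level-$1$ copies are gone, Lemma \ref{tech_lemma-cons}(2) identifies the remaining graph with $(G^{\pi})_{k-2}$ together with isolated vertices, vertex decomposable by induction on $k$ (with $(G^{\pi})_0$ the empty graph when $k=2$). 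Lemma \ref{obs-verdec} then gives that $(G^{\pi})_k$ — and, reading that lemma's proof backwards, every intermediate graph — is vertex decomposable.

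The hard part will be the bookkeeping behind the two ``Firstly/Secondly'' steps rather than any new idea. Because a vertex of $G$ may have $G$-neighbours spread over several blocks of $\pi$, peeling the level-$1$ copies block-by-block does not work (the relevant level-$k$ copies fail to be leaves at the wrong moment), which is exactly why all original level-$1$ copies must be removed before any whisker level-$1$ copy; and verifying that each link really is an intermediate stage of the deletion process of the appropriate smaller clique whiskering — keeping track of which already-deleted copies survive, and of when a deletion graph is totally disconnected versus genuinely a smaller $(\cdot)_{k'}$ — will require the same sort of case analysis (according to whether $G\setminus N_G[x_i]$ meets the nontrivial part of $\pi'$, and whether $k<\lceil\frac{k+3}{2}\rceil$) that appears in the proofs of Theorem \ref{star-verdec} and Theorem \ref{main-result1}. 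Everything else is routine use of Lemmas \ref{simplicial-vertex}, \ref{technical-le}, \ref{tech_lemma-cons}, \ref{obs-verdec}, Corollaries \ref{cor-shedding}, \ref{van-noupure} and Theorem \ref{complete-vedeco}.
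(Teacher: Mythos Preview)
Your proposal is correct and follows essentially the same route as the paper: induct on $|V(G)|+k$, peel off all level-$1$ copies of $V(G)$ first and then the level-$1$ whisker copies, verify the shedding conditions via the simplicial vertices $w_{j,k}$ and the leaves $x_{\ell,k}$, identify the links with smaller clique whiskerings, and finish with $(G^{\pi})_{k-2}$ via Lemma \ref{tech_lemma-cons}(2). The one place the paper is simpler than you anticipate is the handling of the already-deleted level-$1$ copies in the link graphs: rather than strengthening the induction hypothesis to cover all intermediate stages, the paper just observes that each such deleted vertex is a neighbor of a simplicial vertex $w_{m,k}$ in the smaller $(H^{\sigma})_k$ and invokes Corollary \ref{van-noupure} iteratively, so no case analysis of the type in Theorems \ref{star-verdec} or \ref{main-result1} (in particular nothing involving $\lceil\frac{k+3}{2}\rceil$) is needed here.
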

\begin{proof}
Let $\pi=\{W_1,\ldots,W_t\}$ be a clique vertex-partition of $G$ and 
$W_i=\{x_{i_1},\ldots,x_{i_{r_i}}\}$  for all $1 \leq i \leq t$. 
 By proof of  Theorem \ref{main-result2}, it is enough to prove that $G^{\pi}_k$ is vertex decomposable 
 for all $k \geq 1$. We prove this by induction on $m:=k+|V(G)|$. 
 If $k \geq 1$ and $|V(G)|=1$, then by Theorem \ref{star-verdec}, $G_k^{\pi}$ is  vertex decomposable.
 If $k=1$ and $|V(G)| \geq 2$, then by \cite[Theorem 3.3]{CookNagel}, $G_1^{\pi}$ is vertex decomposable.
 
 Assume that $k\geq 2$ and $|V(G)| \geq 2$. For a fixed $1 \leq p \leq t$, let $z_{r_1+\cdots+r_{p-1}+q}=x_{p_q,1}$
 for all $1 \leq q \leq r_p$ and $z_{r_1+\cdots+r_t+l}=w_{l,1}$ for all $1 \leq l \leq t$. Set $\Psi_0=G_k$,
 \[
  \Psi_{i}=\Psi_{i-1} \setminus z_{i} \text{ and } \Omega_i=\Psi_{i-1}\setminus N_{\Psi_{i-1}}[z_i]
  \text{ for all } 1 \leq i \leq r_1+\cdots+r_t+t.
 \]
Since $w_1,\ldots,w_t$ are simplicial vertices of $G^{\pi}$, by Lemmas \ref{simplicial-vertex}, \ref{technical-le},
$z_i$ is a shedding vertex of $\Psi_{i-1}$ for all $1 \leq i \leq r_1+\cdots+r_t$.
Note that $\deg_{\Psi_{r_1+\cdots+r_t}}(x_{i_j,k})=1$ for all $1 \leq i \leq t$, $1 \leq j \leq r_i$.
Since $\{w_{i,1},x_{i_j,k}\} \in E(\Psi_{r_1+\cdots+r_t})$ for all $1 \leq j \leq r_i$, by Corollary \ref{cor-shedding},
$w_{i,1}$ is shedding vertex of $\Psi_{r_1+\cdots+r_t}$. Hence $z_i$ is a shedding vertex of
$\Psi_{i-1}$ for all $r_1+\cdots+r_t \leq i \leq r_1+\ldots+r_t+t$. By Lemma \ref{tech_lemma-cons},
$\Psi_{r_1+\cdots+r_t+t} \simeq G_{k-2}^{\pi} \cup \{\text{ isolated vertices }\}$. By induction on $m$,
$\Psi_{r_1+\cdots+r_t+t}$ is a vertex decomposable graph. 
It follows from Lemma \ref{tech_lemma-cons} that
for any $1 \leq p,l \leq t$, $1 \leq q \leq r_p$, we have
\begin{align*}
 \Omega_{r_1+\cdots+r_{p-1}+q} &\simeq \big(G\setminus N_{G}[x_{p_q}]\big)^{\pi'}_k
 \setminus \big\{z_1,
\ldots ,z_{r_1+\cdots+r_{p-1}+q-1}\big\} \cup \{\text{isolated vertices}\} \text{ and }\\
\Omega_{r_1+\cdots+r_{t}+l} & \simeq \big(G \setminus W_l\big)^{\pi''}_k\setminus \big\{z_1,
\ldots ,z_{r_1+\cdots+r_{t}+l-1}\big\} \cup \{\text{isolated vertices}\},
\end{align*}
where $\pi'=\big\{W_1 \setminus N_{G}[x_{p_q}],\ldots,W_t\setminus N_{G}[x_{p_q}]\big\}$ and 
$\pi''=\big\{W_1,\ldots, W_{l-1},W_{l+1}\ldots,W_t\big\}$.
By induction on $m$, $\big(G\setminus N_{G}[x_{p_q}]\big)^{\pi'}_k$ and 
$\big(G \setminus W_l\big)^{\pi''}_k$ are vertex decomposable graphs. 
Set $\A=\{z_1,\ldots ,z_{r_1+\cdots+r_{p-1}+q-1}\}$. Since every element in $\A$ is either
a neighbor of simplicial vertex or in $(G\setminus N_{G}[x_{p_q}]\big)^{\pi'}_k$, 
by  Corollary \ref{van-noupure}, $\Omega_{r_1+\cdots+r_{p-1}+q}$ is a
vertex decomposable graph. Similarly, we can show that 
$\Omega_{r_1+\cdots+r_{t}+l}$ is a vertex decomposable graph. Hence, by Lemma \ref{obs-verdec}, 
$G^{\pi}_k$ is a vertex decomposable graph for all $k \geq 1$.
\end{proof}

For a monomial ideal $I$, let $\deg(I)$ denote the maximum degree of elements of $G(I)$. 
Thus, in particular, $\deg(J(G))=\max\{|C| : C \text{ is a minimal vertex cover of } G\}.$

\begin{obs}\label{obs-deg} 
Let $G$ be a graph. 
\begin{enumerate}
\item Let $C$ be a minimal vertex cover of $G$. If $x \in C$, then $N_G(x)\nsubseteq C$. 
 \item If $\{x_{i_1},\ldots,x_{i_r}\}$ is a minimal vertex cover of 
$G$, then $$\{x_{i_1,1},\ldots,x_{i_1,k},\ldots,x_{i_r,1},\ldots,x_{i_r,k}\}$$ is a minimal
vertex cover of $G_k$ for all $k \geq 1$. Therefore $k \cdot \deg(J(G)) \leq \deg(J(G_k))$.
\item Suppose $G=\kk(x_1)$ is a star complete graph on $\{x_1,y_1,\ldots,y_n\}$. 
Note that $V(G) \setminus x_1$ is the maximum cardinality of minimal vertex cover of $G$ and
$V(G_k) \setminus \{x_{1,1},\ldots,x_{1,k}\}$ is the minimal vertex cover of $G_k$.
By (1), $V(G_k) \setminus \{x_{1,1},\ldots,x_{1,k}\}$ is the maximum cardinality of minimal vertex cover of $G_k$.
Therefore $\deg(J(G_k))=k(|V(G)|-1)= k \cdot \deg(J(G))$ for all $k \geq 1$.
\item Let $\pi=\{W_1,\ldots,W_t\}$ be the clique vertex-partition of $G$. 
We may assume that $W_i \neq \emptyset$ for all $1 \leq i \leq t$.
Note that 
$V(G)$ is the maximum cardinality of minimal vertex cover of $G^{\pi}$. 
Since $W_i$'s are cliques, 
$V(G_k)$ is the maximum cardinality of minimal vertex cover of $G^{\pi}_k$ for all $k \geq 1$.
Therefore $\deg(J(G^{\pi}_k))=k|V(G)|=|V(G_k^{\pi})|-kt=k \cdot \deg(J(G))$ for all $k \geq 1$.
\end{enumerate}

\end{obs}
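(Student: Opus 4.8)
The plan is to handle the four assertions in turn; the first two are immediate, and the last two reduce to one combinatorial estimate.

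For (1) and (2) I would argue directly. For (1): a minimal vertex cover contains no isolated vertex, since deleting one would leave a smaller cover, so $N_G(x)\neq\emptyset$; and as $C\setminus\{x\}$ is not a cover, some edge $e$ is disjoint from $C\setminus\{x\}$, forcing $x\in e$, say $e=\{x,y\}$, and then $y\in N_G(x)\setminus C$. For (2): set $\widetilde C=\{x_{i_j,p}\mid 1\le j\le r,\ 1\le p\le k\}$. It covers $G_k$, because every edge of $G_k$ is $\{x_{a,p},x_{b,q}\}$ with $\{x_a,x_b\}\in E(G)$, whence $x_a$ or $x_b$ is one of the $x_{i_j}$. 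It is minimal, because given $x_{i_j,p}\in\widetilde C$, part (1) gives an edge $\{x_{i_j},x_\ell\}\in E(G)$ with $x_\ell\notin\{x_{i_1},\dots,x_{i_r}\}$, and then $\{x_{i_j,p},x_{\ell,1}\}\in E(G_k)$ (using $p+1\le k+1$) is not covered by $\widetilde C\setminus\{x_{i_j,p}\}$. Applying this to a $C$ with $|C|=\deg(J(G))$ exhibits a minimal generator of $J(G_k)$ of degree $k\deg(J(G))$, so $k\deg(J(G))\le\deg(J(G_k))$.

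For (3) and (4) I would use that $\deg(J(\Gamma))=|V(\Gamma)|-f(\Gamma)$, where $f(\Gamma)$ is the least cardinality of a maximal independent set of $\Gamma$, and reduce the statements to computing $f$. In (3), with $G=\kk(x_1)$: the center $x_1$ is adjacent to every other vertex, so $\{x_1\}$ is a maximal independent set of $G$ and $\{x_{1,1},\dots,x_{1,k}\}$ is one of $G_k$; thus $f(G)=1$, $\deg(J(G))=|V(G)|-1$, and $f(G_k)\le k$, so it remains to prove $f(G_k)\ge k$. In (4), $G^{\pi}$ is unmixed (every maximal independent set meets each clique $W_i\cup\{w_i\}$ in exactly one vertex), so $f(G^{\pi})=t$ and $\deg(J(G^{\pi}))=|V(G)|$; and $\{w_{i,p}\mid 1\le i\le t,\ 1\le p\le k\}$ is a maximal independent set of $G^{\pi}_k$, so $f(G^{\pi}_k)\le kt$, and it remains to prove $f(G^{\pi}_k)\ge kt$.

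These two lower bounds are the hard part, and I would obtain them by induction on $k$ from the complete-graph case $f((K_m)_k)=k$, which one proves directly: if $x_{a,p},x_{b,q}$ with $a\neq b$ both lie in an independent set of $(K_m)_k$ then $p+q\ge k+2$, so in a maximal independent set $D$ the smallest level used in a column is forced to be large once a second column is used, and maximality — one can add neither a vertex in an unused column nor a low level of a used column — yields $|D|\ge k$ by a short count on the columns $D$ meets. For a general star complete $G=\kk(x_1)$, I would run the shedding recursion behind Theorem \ref{star-verdec} and Lemma \ref{del-verdec}: $x_{1,1}$ is a shedding vertex of $G_k$ whose link is the simplex on $\{x_{1,2},\dots,x_{1,k}\}$, contributing facets of size $k$; peeling further shedding vertices down to $(G\setminus x_1)_k$, Lemma \ref{tech_lemma-cons} presents each link encountered as a simplex joined to a smaller blow-up of $G\setminus x_1=\coprod_s H_s$ (a disjoint union of complete graphs), and an elementary arithmetic check shows every facet so produced has size $\ge k$, the bottom of the recursion $(G\setminus x_1)_k=\coprod_s(H_s)_k$ being covered by the complete-graph case. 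For (4), the analogous computation along the shedding order of the proof of Theorem \ref{main-clique} reduces block by block to the complete-graph case, using that each whisker vertex $w_i$ has all its $G^{\pi}_k$-neighbours inside the block $(W_i\cup\{w_i\})_k\simeq(K_{|W_i|+1})_k$. The only nonroutine work is this bookkeeping: verifying at each stage that maximality obstructs the relevant insertion and that the running arithmetic lower bound never falls below $k$, respectively $kt$.
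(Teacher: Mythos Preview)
Your handling of (1) and (2) is correct and is exactly what the paper intends; these are immediate and the paper records them without comment.

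For (3) and (4), note first that the paper does not actually give a proof: the Observation is stated with a one-phrase hint (``By (1)'' in (3), ``Since $W_i$'s are cliques'' in (4)) and nothing more. Your reduction to the inequalities $f(G_k)\ge k$ and $f(G^{\pi}_k)\ge kt$ on the minimum size of a maximal independent set is exactly the right reformulation, and your computation of $f((K_m)_k)=k$ is correct. However, your plan to then push these bounds through the full shedding recursions of Theorem~\ref{star-verdec} and Theorem~\ref{main-clique} is far heavier machinery than necessary, and you yourself flag the bookkeeping as the nonroutine part.

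A direct count avoids all of that. For (3), let $I$ be a maximal independent set of $G_k$ with $G=\kk(x_1)$, and set $a=\min\{p:x_{1,p}\in I\}$ (or $a=k+1$). One checks immediately that $x_{1,p}\in I$ for all $p\ge a$, and that every $y_{j,q}\in I$ has $q\ge k+2-a$; moreover, for each $y_j$, the set $\{q:y_{j,q}\in I\}$ is an up-set. If $a=1$ then $|I|=k$. If $2\le a\le k$, maximality at $x_{1,a-1}$ forces some $y_{j,k+2-a}\in I$, and that single column already contributes $a-1$ elements, so $|I|\ge(k{+}1{-}a)+(a{-}1)=k$. If $a=k+1$, then for every leaf $y_j$ in any facet, the vertices $y_{j,q}$ can only be dominated from within that facet, and your $(K_m)_k$ count applied to the clique $F_s\setminus\{x_1\}$ gives $|I\cap\{y_{j,q}:y_j\in F_s\}|\ge k$. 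In all cases $|I|\ge k$. For (4) one argues block by block using that each $w_{i,p}$ has all its neighbours inside $B_i$: with $a_i=\min\{p:w_{i,p}\in I\}$ the same up-set argument gives $k{+}1{-}a_i$ whisker vertices in $I\cap B_i$, and if $a_i\ge 2$ maximality at $w_{i,a_i-1}$ produces a column $x_j\in W_i$ with $x_{j,q}\in I$ for all $q\ge k+2-a_i$, hence $|I\cap B_i|\ge k$; if $a_i=k+1$, maximality at $w_{i,k}$ gives $x_{j,1}\in I$ for some $x_j\in W_i$ and then the whole column $\{x_{j,q}:q\}$ lies in $I$. Summing over $i$ yields $|I|\ge kt$.

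So your proposal is correct, but the paper's route (insofar as there is one) is an elementary independence-set count rather than an appeal to the shedding structure; the latter works but obscures what is really a two-line argument once the $(K_m)_k$ case is in hand.
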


Since the regularity of a componentwise linear ideal can be computed from its generators, 
we obtain a formula for the regularity of symbolic powers of the vertex cover ideal 
in terms of the maximum size of minimal vertex covers of graph.
\begin{corollary} \label{reg-cor} Let $G$ be a graph. If 
\begin{enumerate}
 \item $G$ is a star complete graph;
 \item $G=H^{\pi}$ for some graph $H$,  or
 \item $G$ is a bipartite graph with satisfies the hypothesis of
Theorem \ref{main-result1},
\end{enumerate}
then for all $k \geq 1$, $
  \reg(J(G)^{(k)})=k \cdot \deg(J(G)). $
 \end{corollary}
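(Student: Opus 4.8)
The plan is to reduce the regularity computation to a degree computation via componentwise linearity. In each of the three cases $J(G)^{(k)}$ is componentwise linear: in case (1) by Theorem \ref{star-verdec} (which makes $G_k$ vertex decomposable, so $J(G_k)$ has linear quotients by \cite[Proposition 8.2.5]{Herzog'sBook}, hence so does its depolarization $J(G)^{(k)}$ by Lemma \ref{fak-result} and Corollary \ref{pol_reg}(2)); in case (2) this is Theorem \ref{main-clique}; and in case (3) it is Theorem \ref{main-result1}. Now for \emph{any} componentwise linear monomial ideal $I$ one has $\reg(I)=\deg(I)$: the inequality $\reg(I)\geq\deg(I)$ holds because $\beta_{0,\deg(I)}(I)\neq 0$, and the reverse inequality is exactly the content of componentwise linearity (see \cite[Chapter 8]{Herzog'sBook}). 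Hence the whole statement reduces to showing $\deg\big(J(G)^{(k)}\big)=k\cdot\deg(J(G))$. Since polarization does not change the degrees of minimal generators, Lemma \ref{fak-result} gives $\deg\big(J(G)^{(k)}\big)=\deg\big(\widetilde{J(G)^{(k)}}\big)=\deg(J(G_k))$, so it suffices to compute $\deg(J(G_k))$.

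For cases (1) and (2) this is immediate from the computations already carried out in Observation \ref{obs-deg}: part (3) gives $\deg(J(G_k))=k(|V(G)|-1)=k\cdot\deg(J(G))$ when $G$ is a star complete graph, and part (4) gives $\deg(J(G_k^{\pi}))=k|V(G)|=k\cdot\deg(J(G))$ when $G=H^{\pi}$. Combining with the previous paragraph, $\reg\big(J(G)^{(k)}\big)=k\cdot\deg(J(G))$ in cases (1) and (2).

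For case (3) one direction, $k\cdot\deg(J(G))\leq\deg(J(G_k))=\deg\big(J(G)^{(k)}\big)$, is already provided by Observation \ref{obs-deg}(2), and I expect the reverse inequality to be the only genuinely delicate point, since Observation \ref{obs-deg} gives no closed formula for $\deg(J(G_k))$ at this level of generality. I would settle it by invoking that $G$ is bipartite: for a bipartite graph the symbolic and ordinary powers of the cover ideal coincide, so $J(G)^{(k)}=J(G)^k$, and since every minimal generator of $J(G)^k$ is a product of $k$ minimal generators of $J(G)$ we obtain $\deg\big(J(G)^{(k)}\big)=\deg(J(G)^k)=k\cdot\deg(J(G))$. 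Feeding this into the first paragraph yields $\reg\big(J(G)^{(k)}\big)=k\cdot\deg(J(G))$ in case (3) as well.

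The remaining ingredients are routine and can be cited rather than reproved: the behaviour of polarization on the degrees of generators, the identity $\reg(I)=\deg(I)$ for componentwise linear $I$, and the equality of symbolic and ordinary powers of cover ideals of bipartite graphs. The one place where some care is genuinely needed is the upper bound $\deg(J(G_k))\leq k\cdot\deg(J(G))$ in case (3); everything else is an assembly of results already established in the paper.
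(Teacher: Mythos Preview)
Your proposal is correct and follows essentially the same line as the paper's proof: establish linear quotients (hence componentwise linearity) for $J(G)^{(k)}$ via the appropriate theorem in each case, use that $\reg(I)=\deg(I)$ for such ideals (the paper cites \cite[Corollary 8.2.14]{Herzog'sBook}), and then compute $\deg(J(G)^{(k)})$ via Observation \ref{obs-deg} for cases (1) and (2) and via the equality $J(G)^{(k)}=J(G)^k$ for bipartite $G$ (the paper cites \cite[Corollary 2.6]{GRV05}) in case (3). The only cosmetic difference is that the paper treats case (1) as an instance of Theorem \ref{main-result1} rather than invoking Theorem \ref{star-verdec} directly, and it does not bother to separate the two inequalities in case (3); your handling via Observation \ref{obs-deg}(2) for the lower bound plus the product structure of $J(G)^k$ for the upper bound is a perfectly clean way to record the same computation.
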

\begin{proof}
It follows from  Theorem \ref{main-result1} and Theorem \ref{main-clique} that
$J(G)^{(k)}$ has linear quotients for all $k \geq 1$. Therefore \cite[Corollary 8.2.14]{Herzog'sBook},
$\reg(J(G)^{(k)})$ is equal to the highest degree of a generator in a minimal set of generator
of $J(G)^{(k)}$. 
\vskip 0.1mm \noindent
(1) If $G$ is a star complete graph, then 
by Corollary \ref{pol_reg}, Lemma \ref{fak-result} and Observation \ref{obs-deg}, for all $k \geq 1$,
$
 \reg(J(G)^{(k)}) = \reg(\widetilde{J(G)^{(k)}})=\reg(J(G_k))=\deg(J(G_k))=k \cdot \deg(J(G)).
$
\vskip 0.1mm \noindent
(2) Follows from Corollary \ref{pol_reg}, Lemma \ref{fak-result} and Observation \ref{obs-deg}.
 \vskip 0.1mm \noindent
(3) Since $G$ is a bipartite graph, by \cite[Corollary 2.6]{GRV05},
$J(G)^k=J(G)^{(k)}$ for all $k \geq 1$. The remaining proof is similar to the above case.
\end{proof}

The following example shows that if $G$ is not bipartite, then the assertion of the
Corollary \ref{reg-cor}(3) need not necessarily be true.
\begin{example}\label{ex:notlinearquotients}
Let $I=(x_1x_2,x_2x_3,x_3x_1,x_1x_4,x_1x_5,x_2x_6,x_2x_7,x_3x_8,x_3x_9)$ be an ideal
and $G$ be the associated graph. 
Note that
$ J(G)=(x_1x_2x_3,x_2x_3x_4x_5,x_1x_3x_6x_7,x_1x_2x_8x_9)$ and
 $J(G)^{(2)}=J(G)^2+(x_1x_2x_3x_4x_5x_6x_7x_8x_9).$
Note that $G$ satisfies the hypothesis of Theorem \ref{main-result1}.
It follows from \cite[Theorem 8.2.15 and Corollary 8.2.14]{Herzog'sBook} and 
Theorem \ref{main-result1} that
$\reg(J(G)^{(k)})$ is equal to the highest degree of a generator in a minimal set of generators of
$J(G)^{(k)}$ for all $k \geq 1$. Therefore, $\reg(J(G))=4$ but $\reg(J(G)^{(2)})=9$.

\end{example}

We conclude the paper by raising the following question.
If $G$ is a graph with satisfies the hypothesis  
Theorem \ref{main-result2} or Theorem  \ref{main-result1} or Theorem \ref{main-clique}, then $G_k$ is a 
vertex decomposable graph for all $k \geq 1$. 
As a natural extension of these results, one tend to think that the same expression may hold true 
for vertex decomposable graphs. This is not the case (see, Example \ref{ex:compnt}).
 Therefore, we would like to ask:
 \begin{question}
  For which vertex decomposable graphs $G$, $G_k$ is a vertex decomposable graph for all $k \geq 2$?
  More generally, for which (sequentially) Cohen-Macaulay graphs $G$, $G_k$ is a (sequentially) Cohen-Macaulay graph
  for all $k \geq 2$?
 \end{question}
\vskip 1mm \noindent
\textbf{Acknowledgement:} 
The author would like to thank A. V. Jayanthan for his encouragement
and insightful conversations.
The author extensively used \textsc{Macaulay2} and the packages \textsc{EdgeIdeals}, \cite{FHV_software},
\textsc{SimplicialDecomposability}, \cite{Cook}, \textsc{SymbolicPowers}, \cite{symbolic-package},
for testing his computations. The author would also like to thank the
Institute of Mathematical Sciences, Chennai for financial support.

\bibliographystyle{abbrv}
\bibliography{refs_reg} 
\end{document}